\numberwithin{equation}{section} 
\DeclareFontFamily{OT1}{rsfs}{}
\DeclareFontShape{OT1}{rsfs}{n}{it}{<-> rsfs10}{}
\DeclareMathAlphabet{\mathscr}{OT1}{rsfs}{n}{it}
\newtheorem{theorem}{Theorem}[section]
\newtheorem{proposition}[theorem]{Proposition}
\newtheorem{lemma}[theorem]{Lemma}
\newtheorem{corollary}[theorem]{Corollary}
\theoremstyle{definition}
\newtheorem{definition}[theorem]{Definition}
\theoremstyle{remark}
\newtheorem{remark}[theorem]{Remark}
\begin{document}

\newcommand{\Hom}{{\mathrm {Hom}}}

\newcommand{\eP}{{\mathscr P}}
\newcommand{\eE}{{\mathscr E}}
\newcommand{\eG}{{\mathscr G}}
\newcommand{\eV}{{\mathscr V}}

\newcommand{\spec}{{\rm Spec}\,}

\newcommand{\Aut}{\mbox{{\rm Aut}$\,$}}
\newcommand{\bc}{{\mathbb C}}
\newcommand{\bq}{{\mathbb Q}}
\newcommand{\br}{{\mathbb R}}

\newcommand{\cG}{\mathcal{G}}
\newcommand{\cE}{\mathcal{E}}
\newcommand{\ce}{{\mathcal E}}
\newcommand{\co}{\mathcal O}

\newcommand{\cb}{\mathcal{B}}

\newcommand{\cg}{{\mathcal G}}
\newcommand{\ca}{{\mathcal A}}
\newcommand{\RR}{\mathbb{R}}
\newcommand{\gfr}{\mathfrak{g}}
\newcommand{\cO}{\mathcal{O}}

\newcommand{\cF}{\mathcal{F}}
\newcommand{\cM}{\mathcal{M}}
\newcommand{\cA}{\mathcal{A}}

\newcommand{\lra}{\longrightarrow}

\newcommand{\PP}{\mathbb{P}}
\newcommand{\ZZ}{\mathbb{Z}}
\newcommand{\GG}{\mathbb{G}}
\newcommand{\QQ}{\mathbb{Q}}
\newcommand{\CC}{\mathbb{C}}
\newcommand{\Id}{\mathrm{Id}}

\title[Connections on parahoric torsors over curves]{Connections on parahoric
torsors over curves}

\author[V. Balaji]{Vikraman Balaji}
\address{Chennai Mathematical Institute, Sipcot IT Park, Siruseri 603103, India}
\email{balaji@cmi.ac.in}

\author[I. Biswas]{Indranil Biswas}

\address{School of Mathematics, Tata Institute of Fundamental
Research, Homi Bhabha Road, Mumbai 400005, India}

\email{indranil@math.tifr.res.in}

\author[Y. Pandey]{Yashonidhi Pandey}

\address{Indian Institute of Science Education and Research, Mohali Knowledge city,
Sector 81, SAS Nagar, Manauli PO 140306, India}

\email{ypandey@iisermohali.ac.in}

\date{}

\keywords{Bruhat--Tits group scheme; parahoric 
torsor; connection; polystability.}

\subjclass[2010]{14F22, 14D23, 14D20.}

\begin{abstract}
We define parahoric $\cG$--torsors for certain Bruhat--Tits group scheme $\cG$ on a 
smooth complex projective curve $X$ when the weights are real, and also define 
connections on them. We prove that a $\cG$--torsor is given by a homomorphism from 
$\pi_1(X\setminus D)$ to a maximal compact subgroup of $G$, where the finite subset $D\, 
\subset\, X$ is the parabolic divisor, if and only if the $\cG$--torsor is polystable.
\end{abstract}

\maketitle

\tableofcontents

\section{Introduction}

Let $X$ be an irreducible smooth projective curve defined over the field of complex 
numbers. Let $G$ be a simple and simply connected affine algebraic group defined over 
$\CC$. Fix a finite subset $D \,\subset \,X$. Let $\cG$ be a Bruhat--Tits group 
scheme with parabolic points at $D$ (see Definition \ref{ht1}).

In \cite{bs}, an analogue of the Mehta--Seshadri theorem in \cite{MS} was proved relating 
stable parahoric torsors under Bruhat--Tits group schemes with irreducible homomorphisms 
of certain Fuchsian groups into a maximal compact subgroup of $G$. This was done under 
the assumption that the parabolic weights are rational, or equivalently, the fixed points 
of the Fuchsian group are all {\em elliptic}. Recall that in \cite{bs} it is shown that 
if the weights are chosen {\em rational} then one can recover the $\cG$--torsors as 
invariant direct images of orbifold principal bundles with respect to suitable ramified 
covers of $X$ ramified over the parabolic points.

An obstruction to cover real weights in the setting of parahoric torsors is that 
the classical Bruhat--Tits group scheme is defined on spectra of discrete valuation 
rings while the phenomenon of parabolic bundles with real weights naturally lies in 
the setting of an analytic neighborhood of the origin. Further, the notion of 
invariant direct image fails to generalize directly to the analytic setting when 
the covering map is no longer algebraic. We address this issue by working with a 
natural analogue of Deligne extensions to the parahoric torsor setting; to a pair $(E,\,
\widehat{d})$ 
of a principal $G$--bundle $E$ equipped with a flat connection $\widehat{d}$ on the 
punctured disc, we give a canonical extension to a torsor under Bruhat--Tits group 
scheme on the compact Riemann surface (see Section \ref{se10.2}). We use this to 
construct the parahoric torsor associated to a representation of the fundamental 
group of the punctured Riemann surface.

Another obstruction in case of real weights is the notion of stability for parahoric 
torsors. Filling the lacuna in \cite{bs}, we have a definition of stability for parahoric 
torsors (\ref{newss}) which covers real weights as well. Then, following the approach in 
the paper of Mehta--Seshadri, \cite[p.~217]{MS}, in Section \ref{variation-ss} we develop 
the theme of {\em variation of weights} in the setting of parahoric torsors, where the 
notions of (semi)stability for the case of real weights are shown to coincide for 
``nearby'' rational weights. Although the broad lines are the same this generalization is 
not entirely straight-forward.

We then go on to define the notion of a connection on a $\cG$--torsor over a smooth 
projective curve over $\bc$ and prove the analogue of the Donaldson--Uhlenbeck--Yau 
correspondence in the parahoric setting when the weights are {\em real}. The basic 
idea is to use the Tannakian formalism and the argument reduces to the case of the 
associated parabolic Lie algebra bundle. The more general reductive group case is 
then an easy generalization.

\subsection{Origins}\label{se.or}

In the early 60s, Mumford defined the notion of stability for vector bundles on curves as 
a tool to get Hausdorff moduli spaces; using geometric invariant theory, he then 
constructed the moduli space of stable vector bundles. In (\cite{NS}) Narasimhan and 
Seshadri gave an alternative characterization of stability using flat connections; more 
precisely, they proved that a holomorphic vector bundle $E$ on a compact Riemann surface 
is stable if and only if $E$ arises from an irreducible projective unitary 
representations of the fundamental group of the Riemann surface. This correspondence 
between flat projective unitary connections and stable vector bundles has been 
generalized in several directions. A. Ramanathan (\cite{Ra}) extended the correspondence 
to the case of holomorphic principal $G$--bundles, where $G$ is a complex reductive 
affine algebraic group \cite{Ra} On the other hand, Mehta and Seshadri (\cite{MS}) 
generalized the Narasimhan--Seshadri construction to include unitary logarithmic 
connections, or equivalently, to classify irreducible unitary representations of general 
Fuchsian groups with fixed conjugacy classes. These logarithmic connections are those 
which have regular singularity at finitely many points and were already apparent in the 
early work of Weil (\cite{We}); their importance was emphasized by Deligne in 
(\cite{del}). The objects replacing stable bundles in the Mehta--Seshadri correspondence 
are stable parabolic vector bundles. Following Donaldson's reinterpretation of the 
Narasimhan--Seshadri correspondence, Biquard (\cite{biquard}) gave a differential 
geometric interpretation of this Mehta--Seshadri correspondence.

It is a very natural problem to generalize the Mehta--Seshadri correspondence from the 
setting of parabolic vector bundles to that of principal $G$--bundles, where $G$ is a 
complex reductive affine algebraic group. On the side of representations, the objects 
were easy to define; they were homomorphisms of Fuchsian groups taking values in a 
maximal compact subgroup of $G$ such that for each puncture of the Riemann surface the 
associated conjugacy class in the fundamental group of the surface is mapped to a fixed 
conjugacy class of the maximal compact subgroup of $G$. Similarly, on the side of 
connections, the corresponding objects were quite well-understood since the work of 
Deligne. The central problem was to generalize the notion of a stable parabolic vector 
bundle to the setting of principal $G$--bundles. From a Tannakian perspective 
(\cite{babina}) it became apparent that a naive generalization in terms {\em parabolic 
$G$--bundles, i.e., principal $G$ bundles with parabolic structures}, was insufficient 
for setting up a comprehensive analogue of the Mehta--Seshadri correspondence.

At a technical level, it was not clear what the correct notion of weight should be in the 
general setting. A breakthrough came in the work of Boalch (\cite{boalch}) and 
Balaji--Seshadri (\cite{bs}). These papers introduced the correct and very natural notion 
of weight, namely, as a point in the affine apartment of $G$. As a consequence of this 
point view, the reason for the inadequacy of parabolic $G$--bundles also became clear. It was 
realized that instead of parabolic $G$--bundles one should consider torsors or principal 
homogeneous spaces under parahoric group schemes in the sense of Bruhat and Tits. Balaji 
and Seshadri (\cite{bs}) extended the Mehta--Seshadri theorem to the case of parahoric 
torsors with rational weights.

Prior to these works, there were at least two partial approaches to generalize the 
Mehta--Seshadri theorem, both around the turn of the millennium. The approach in (\cite{babina}) 
was Tannakian in spirit and followed the method of Nori \cite{nori}; this Tannakian approach identified the basic 
problem, namely that the object associated to a representation of a Fuchsian group into the
maximal compact of $G$, such
that for each puncture of the Riemann surface the associated conjugacy class in the fundamental
group of the surface is mapped to a fixed conjugacy class of the maximal compact subgroup of $G$,
in general will not be a principal $G$--bundle on the Riemann surface. The approach in 
(\cite{tw}) again gave a partial solution to the problem; in the language of Weyl alcoves, the 
solution was for weights in the interior of the Weyl alcove which corresponds to the 
subclass of {\em parabolic $G$--bundles}.

Parahoric group schemes $\cg$ and $\cg$--torsors over smooth projective curves were first 
defined and studied by Pappas--Rapoport \cite{pr}. Subsequently, in \cite{pr2}, they
made several conjectures on the moduli stacks of $\cg$--torsors. Most of 
these conjectures were answered by Heinloth (\cite{He}) providing a precise setting for the study 
of these moduli stacks. The paper of Seshadri (\cite{sesass}), takes up the question of 
the analogue of the Mehta--Seshadri theorem; the main emphasis in (\cite{sesass}) was again was to point 
out that for a solution to the problem of obtaining analogues of the Mehta--Seshadri 
theorem, one has to go beyond the category of principal $G$--bundles. Evidence to the role 
of Bruhat--Tits theory was also given in a few illustrative examples. The paper of Boalch
(\cite{boalch}) studies logarithmic connections on $G$--bundles and the notion of 
parahoric torsors comes along with the first appearance of the notion of weights for 
these torsors. Around the same time and independently in \cite{bs}, a similar notion of 
weights was defined towards providing a satisfactory analogue of the Mehta--Seshadri 
theorem in the general setting for semisimple groups $G$, thereby completing the broad 
picture outlined in \cite{sesass}. The notion of ``invariant direct images" of torsors in 
\cite{bs} plays a key role analogous to the one in \cite{MS} for the case of vector 
bundles.

\section{Preliminaries}\label{prelim}

In this section we collect together some standard notions and notation 
that will be used throughout this paper. See \cite{bt1}, \cite{bt}, \cite{bs},
\cite{He} for this section and the next one.

The base field will always be $\mathbb C$.

Define
\begin{equation}\label{z1}
A\, :=\, \CC[[t]]\ \ \ \text{and }\ \ \ K\,:=\, \CC((t))\, ,
\end{equation}
where $t$ denote a uniformizing parameter. Let $G$ be a {\em semisimple simply
connected affine algebraic group} defined over $\CC$. The Lie algebra of $G$
will be denoted by $\mathfrak g$. Fix a maximal torus $T \,\subset\, G$; let
$Y(T)\,=\, \Hom(\GG_m,\, T)$ be the group of all
holomorphic one--parameter subgroups of $T$.

\subsection{Apartment of $G$}\label{se2.1}

For each maximal torus $T$ of $G$, we have the {\it
standard affine apartment} $\ca_{_T}$. It is an affine space under $Y(T)
\otimes_\ZZ \RR$. In general, there is no origin in the apartment (cf. \cite{bt1}). But
for purposes of this paper, {\it we shall identify $\ca_{_T}$ with $Y(T) \otimes_\ZZ \RR$}
(see \cite[\S~2]{bs}). 

Let ${\mathcal V}$ be a real affine 
space. A function $f\,:\, {\mathcal V}\,\longrightarrow\,
 \RR$ is said to be an {\it affine functional} if 
$$f(rx + (1-r)y)\,=\,rf(x)+(1-r)f(y)$$ 
for all $x\, ,y \,\in\, \mathcal{V}$ and $r \,\in\, \RR$.
Thus, for a root $\alpha$ of $G$ and an integer $n \,\in\, \ZZ$ there is the affine
functional $$\alpha + n\,:\, \ca_{_T}\,\longrightarrow\, \RR\, , \ \ x \,\longmapsto\, 
\alpha(x-0)+n\, .$$ We note that these are called the {\it affine roots of $G$}. The
zero locus of $\alpha + n$ will be denoted by $H_{\alpha+n}$; it is called an 
{\it affine wall}. The set of affine walls is known to be {\it locally finite}, meaning any compact 
subset of $\ca_{_T}$ intersects only finitely many affine walls. For any point $x \,
\in\, \ca_{_T}$, let $Z_x$ denote the set of affine functionals vanishing on $x$. For an integer
$n \,\geq \, 0$, define
$$\mathcal{H}_n\,=\, \{ x \in \ca \,\mid\, \quad |Z_x|\,=\,n\}\, ,
$$
which is the set of all points where $n$ of the affine functionals vanish. A {\it facet} $\Omega$ of $\ca_{_T}$ is defined to be a connected component of $\mathcal{H}_n$ for some $n$. The {\it dimension} of a facet is its dimension as a real manifold. We then have a decomposition
of the apartment
\begin{equation}\label{simpdec}
\ca_{_T} \,=\, \bigsqcup_n \mathcal{H}_n\, .
\end{equation}

Although, as mentioned above, almost always $\Theta$ will be a point of $\ca_{_T}$, 
sometimes $\Theta$ will also be a facet of $\ca_{_T}$.

\section{Parahoric group scheme and torsors}

\subsection{Invariant direct image}\label{invdirimage}

The base field is $\mathbb C$.
Let $p\,:\,W \,\longrightarrow\, U$ be a finite flat surjective morphism of 
normal, integral Noetherian schemes which is Galois. So the Galois group 
$\text{Gal}(p)$, which we will denote by $\Gamma$, acts on $W$ with $U \,=\, 
W/{\Gamma}$ being the quotient. Such a morphism $p$ is called a {\em Galois covering} with Galois 
group $\Gamma$.

Let $\eG$ be an {\em affine group scheme} over $W$. For the above Galois covering map
$p$, the direct image $p_{_*}\eG$ is defined as follows: for each $U$--scheme $S$, set
\begin{equation}
(p_{_*}\eG)(S) \,=\, Hom_{_{W}}(S \times_U W,\,\eG)\, ;
\end{equation}
this is representable by a group scheme \cite[Theorem 4 and Proposition
6]{blr}. Assume that the group scheme $\eG$ is equipped with an action of
the Galois group $\Gamma$
that lifts the action of $\Gamma$ on $W$; in particular, the ``multiplication map'' and
the ``inverse map'' on $\eG$ are $\Gamma$--equivariant. Such a $\eG$ will be called
a {\em $\Gamma$--group scheme} over $W$.

There is a left
action of $\Gamma$ on $S \times_U W$ induced by the action of $\Gamma$ on $W$. 
This and the left action of $\Gamma$ on $\eG$ together induce the following right action of
$\Gamma$ on $(p_{_*}\eG)(S)$:
\begin{equation}
(f.\gamma)([s,w])\,:=\, \gamma^{-1}.f({\gamma}.[s,\,w])\, ,\ \ [s,w] \,\in\, S \times_U W\, 
, \ \gamma \,\in\, \Gamma\, .
\end{equation}
Consider the {\em fixed point} subscheme under the above action of $\Gamma$
on $p_{_*}\eG$. The
general results on fixed point subschemes given in \cite[Section 3]{bass} can be applied
to our situation {\em since the characteristic is $0$}. Consequently, a
canonically defined {\em smooth} closed $X$--subgroup scheme
$$p^{\Gamma}_{_*}\eG \,:= \,(p_{_*}\eG)^{^\Gamma} \,\subset\, p_{_*}\eG$$
is obtained. This $p^{\Gamma}_{_*}\eG$ is representable because $p_{_*}\eG$ is representable.

\begin{definition}[{Invariant direct image}]\label{invariantdirectimage}
 Let $p\,:\,W \,\longrightarrow\, U$ be as above, and let $\Gamma \,= {\rm Gal}(W/U)$.
Let $\eG$ be a smooth affine $\Gamma$--group scheme over $W$. Define the
{\em invariant direct image} of $\eG$ to be
\begin{equation}
p^{\Gamma}_{_*}(\eG)\,:=\, (p_{_*}\eG)^{\Gamma}\, ,
\end{equation}
so $(p^{\Gamma}_{_*}\eG)(S)\,=\, \eG(S \times_U W)^{\Gamma}$ for any $U$--scheme $S$. 

More generally, if $E$ is any affine scheme over $W$ with a lift of the
$\Gamma$--action on $W$, then define the {\em invariant direct image} of $E$ to be
$$p^{\Gamma}_{_*}E\,:=\, (p_{_*}E)^{\Gamma}\, .$$
\end{definition}

\subsection{Parahoric torsors}

Notation of Section \ref{se2.1} will be followed.
Let $R\,=\,R(T,\,G)$ denote the root system of $G$ (cf. \cite[p. 125]{springer}). Thus for
every $r \,\in\, R$, there is
the root homomorphism $u_r \,: \, \GG_a\,\lra\, G$ \cite[Proposition 8.1.1]{springer}. 

For any non-empty subset
$\Theta\,\subset\, \ca_T$, the {\it parahoric subgroup} ${\eP}_{_\Theta}\,\subset\, G(K)$
\begin{equation}
{\eP}_{_\Theta} \,:=\, \langle T(A),\,\ \{u_{r}(t^{m_r(\Theta)}A)\}_{r \in R} \rangle\, .
\end{equation}
is the subgroup generated by $T(A)$ and $\{u_{r}(t^{m_r(\Theta)}A)\}_{r \in R}$,
where $$m_r\,=\,m_r(\Theta) \,=\, - [{\rm inf}_{\theta \in \Theta} (\theta,r)]\, ,$$
and $A$ is defined in \eqref{z1} \cite[Page 8]{bs}. Moreover,
by \cite[Section 1.7]{bt} we have an affine flat smooth group scheme $\cG_{_\Theta}\,\lra\,
\spec(A)$ corresponding to $\Theta$. The set of $K$--valued (respectively,
$A$--valued) points of $\cG_{_\Theta}$ is identified with $G(K)$ (respectively,
$\mathscr{P}_{_\Theta}$). The group scheme $\cG_{_\Theta}$ is uniquely determined by its
$A$--valued points. Here we will often take $\Theta$ to be just a point of $\ca_{_T}$.
\begin{remark} We remark that the notion of a parahoric subgroup is defined in the greatest generality in the basic papers of Bruhat and Tits. For our purposes, the definition given above is sufficient.\end{remark}

Let $X$ be a smooth complex projective curve. We fix once for all a nonempty finite set
of closed points 
\begin{equation}\label{parabolicpoints}
D \,= \,\{ x_{_j} \}_{j=1}^m\, \subset\, X\, .
\end{equation}
These will play the role of parabolic points. Let $A_{_j}\,:=\,
\widehat{\co_{_{X,x_{_j}}}}$ be the complete discrete valuation ring with
function field $K_{_j}\,\simeq\, \bc((t))$ and {\em residue field}\ $\bc$, obtained
by completing the local rings $\cO_{X,x_j}$. We shall denote $\spec(A_j)$ by $D_j$.

\begin{definition}\label{ht1} Let $\cG$ be a flat, affine group scheme on $X$ of finite
type. We call $\cG$ a {\em Bruhat--Tits group scheme} with parabolic points $D$, if
\begin{enumerate}
\item restricted to $X \setminus D $, it
is isomorphic to the split group scheme $G \times (X \setminus D)$, and

\item $\cG|{_{_{D_j}}}\,\longrightarrow\, D_j$
is a Bruhat--Tits group scheme for each $j$.
\end{enumerate}
We shall denote $\cG$ by $\cG_\Omega$, where
$\Omega \,=\, \{\Omega_j\}_{j=1}^{m}$ is a collection of facets of the
Bruhat--Tits building with $\cG|_{D_j}$ corresponding to $\Omega_j$.
\end{definition}
By the general theory due to Bruhat and Tits, one has an affine flat smooth group scheme 
$\cg_{_\Theta}$ on $\spec(A)$ corresponding to $\Theta$. The set of $K$--valued 
(respectively, $A$--valued) points of $\cg_{_\Theta}$ is identified with $G(K)$ 
(respectively, ${\eP}_{_\Theta}$). The group scheme $\cg_{_\Theta}$ is uniquely 
determined by its $A$--valued points. These notions were defined and the moduli stack of 
$\cg_{_\Theta}$--torsors studied extensively in the papers of Pappas--Rapoport (cf. 
\cite{pr}, \cite{pr2}) and Heinloth (\cite{He}). To construct one on the whole of $X$ one 
can proceed as in \cite[5.2]{bs}. Existence of such group schemes also follows from the 
invariant direct images constructed above (see also \cite[Theorem 5.2.7]{bs}).

\begin{definition}[{\cite[Section 6]{bs}}]\label{quasiparahoric}
A {\it quasi--parahoric} torsor $\cE$ is a ${\mathcal 
G}_{_{\Omega,X}}$--torsor on $X$. \end{definition}

\begin{definition}[{\cite[Section 6]{bs}, \cite{boalch}}]\label{parahorictorsor}
A {\it parahoric torsor} is a pair $(\cE ,\, 
{\boldsymbol\theta})$ consisting of
\begin{enumerate}
\item a ${\mathcal G}_{_{\Omega,X}}$--torsor $\cE\,\longrightarrow\, X$, and
\item weights, meaning elements ${\boldsymbol\theta}\,=\, \{\theta_i\}_{i=1}^m
\,\in\, (Y(T)\otimes \br)^m$ in the interior of $\Omega_i$.
\end{enumerate}
\end{definition}

\begin{remark}\label{classicalwts} 
The above notion of weight is the precise analogue of the classical weight for a 
parabolic vector bundle with {\em multiplicity} (cf. \cite[page 211, Definition 
1.5]{MS}).
\end{remark}

\begin{remark}\label{usedlater}
It should be noted that as in \cite{bs}, the theory of Bruhat--Tits group schemes 
that are used here assumes that the group $G$ is semisimple and simply connected. 
On the other hand, for the case of ${\rm GL}(V)$, which satisfies neither of these 
conditions, the parabolic bundles is classical (cf. \cite{pibundles}, \cite{MS},
\cite{bis}). In \cite[Example 2.3.4]{bs} and \cite[Remark 6.1.5]{bs}, it is noted that the 
torsors under Bruhat--Tits group schemes for ${\rm GL}(V)$ are same 
as the parabolic vector bundles. Let us spell this out for 
the convenience of the reader. Let $\mathcal{GL}(V)$ be a Bruhat--Tits group 
scheme on $\spec(A)$ with generic fiber ${\rm GL}(V)$. Fix a maximal torus 
$T(V)\,\subset\,{\rm GL}(V)$. Let $\ce$ be a $\mathcal {GL}(V)$--torsor, and let 
$\theta(V)\,\in\, Al(T(V))_{_\br}$ be a weight as a point in the so-called Weyl 
alcove (see \cite[p. 9]{bs}). Then, the associated vector bundle gets a canonical 
parabolic structure with quasi--parabolic type determined by the group scheme 
$\mathcal{GL}(V)$ while the parabolic weights are given by $\theta(V)$. We observe 
that in the case of ${\rm GL}(n, {\mathbb C})$, or ${\rm SL}(n, {\mathbb C})$, 
giving a point $\theta(V)\,\in\, Al(T(V))_{_\br}$ in the Weyl alcove is 
equivalent to giving $n$--tuples $(\alpha_{_1},\, \alpha_{_2},\, \cdots ,\,
\alpha_{_n})\,\in \,\br^{^{n}}$, such that every $\alpha_{_i} \,\geq\, 0$ and 
$\alpha_{_i}\,\leq\, \alpha_{_{i+1}}$, for $1 \,\leq\, i \,\leq\, n-1$; in the 
case of ${\rm SL}(n, {\mathbb C})$, the condition of the determinant translates to 
the further condition $$\sum_{i = 1}^{n} \alpha_{_i} \,\in\, {\mathbb Z}\, .$$ By 
a Weyl group conjugation, we can also arrange the $\alpha_{_i}$'s in increasing 
order, i.e., $0 \,\leq\, \alpha_{_1} \,\leq\, \alpha_{_2} \,\leq\,\cdots \,\leq 
\,\alpha_{_n} \,<\, 1$. Once the parahoric subgroup is chosen, this determines a 
flag type and hence we may even order the parabolic weights in a strictly 
increasing sequence.
\end{remark}

\subsection{Rational weights}\label{rationalweights}

Starting with a $m$--tuple of weights ${\boldsymbol\theta}\,\in\, (Y(T) \otimes \bq)^m$, 
following the proof of the converse in \cite[Theorem 2.3.1]{bs}, we get positive 
integers $d_1,\, d_2,\, \cdots ,\, d_m$ such that $d_i\cdot\theta_i \,\in\, Y(T)$. 
By choosing these $d_i$ to be {\em smallest with this property}, we see that a choice 
of $\boldsymbol\theta$ entails a choice of ramification index $d_x$ at each point 
$x\, \in\, D$.

There exists a ramified Galois cover of {\it curves} $p\,:\,Y \,\lra\, X$ which is
\begin{itemize}
\item unramified
over $X\setminus D$, and

\item the ramification index over $x\,\in\,D$ is $d_x$,
\end{itemize}
if and only if exactly one of the following conditions hold:
\begin{enumerate}
\item the genus of $X$ is nonzero,

\item $X\,=\,\PP^1$ and $\# D \,\geq\, 3$,

\item $X\,=\,\PP^1$, $\# D \,=\, 2$, and $d_x\,=\, d_y$, where $D\,=\, \{x,\, y\}$.
\end{enumerate}
(See \cite[p. 26, Proposition 1.2.12]{Na}.)

When we consider parahoric torsors as invariant direct images of 
$(\Gamma,G)$ bundles, it will always be with respect to such a Galois cover.
See also \cite{bis}.

\subsection{Rational weights and parahoric $\cG$--torsors as
$(\Gamma,G)$--bundles}\label{mtbs}

Let $(X,\,D)$ be as above, and let $\cG \,\lra\, X$ be a
Bruhat--Tits group scheme over $X$. By
\cite[Theorem 5.3.1]{bs}, there exists a (possibly ramified) finite Galois cover $p
\,:\, Y \,\lra\, X$
branched over $D$, and a principal $G$--bundle $F$ over $Y$ (cf. \cite[Notation 5.1.0.1]{bs}) equipped
with a lift of the action of the Galois group $\Gamma\, :=\, \text{Gal}(p)$, such
that the following statements hold:
\begin{enumerate}
\item Let $_FG\, :=\, Isom_Y(F,F)$ be the twisting of the constant group scheme
$G\,\lra\, Y$ by $F$. The invariant direct image satisfies the condition 
\begin{equation}\label{gpscheqn} p_*^{\Gamma} {}_FG \,=\, \cG\, .
\end{equation} 

\item Let $D_Y\, :=\, p^{-1}(D)\, \subset\, Y$ denote the ramification points of
the covering $p$. For each $y \,\in\, D_Y$, let $\Gamma_y\, \subset\, \Gamma$ be the isotropy
subgroup that fixes $y$. Let $\tau_y \,:\, \Gamma_y \,\lra\, \Aut(F_y)$ be the action of
$\Gamma_y$ on the fiber $F_y$. Its conjugacy class is called the local type of $F$ at
$y$; this local type will be denoted by $[\tau_y]$. By the type $\tau$ of $F$, we shall mean the set of conjugacy classes $[\tau_y]$ of $\tau_y$:
$$\tau \,:= \,\{[\tau_y]\,\mid\, y \,\in\, D_Y\}.$$ 
Let $\cM_Y^{\tau}(\Gamma, G)$ denote the moduli stack of 
$(\Gamma,G)$ bundles over $Y$ of type $\tau$, and let $\cM_X(\cG)$ denote the
moduli stack of $\cG$ torsors on $X$.
Then there is an isomorphism of moduli stacks 
\begin{equation}\label{sbs}
\alpha_F\,:\, \cM_Y^{\tau}(\Gamma, G) \,\lra \,\cM_X(\cG)\, ,
\end{equation}
given by the $(\Gamma,G)$ bundle $F$ as follows:
Denote by $F^{op}$ the left
$G$--bundle defined by $gf\,: = \,f g^{-1}$, where $g\, \in\, G$ and $f\, \in\, F$. The
above group scheme ${}_FG$ acts on the right of $F^{op}$. The isomorphism in \eqref{sbs} is:
\begin{equation}\label{updown}
E \,\longmapsto \,p_*^\Gamma(E \times_{Y,G} F^{op}) \,=\, p_*^\Gamma(Isom_Y(E,F)).
\end{equation}
The inverse of the map in \eqref{sbs} is given by 
\begin{equation}\label{downup}
\cE \,\longmapsto\, p^*\cE \times_{p^*\cG} F\, .
\end{equation}

\item Let $y \,\in\, D_Y$, and $x\,:=\,p(y)$. Let $N_y\,=\,{\rm Spec}(B)$, where $B\,=\,
\widehat{\cO_{Y,y}}$, and also $D_x\,=\,{\rm Spec}(A)$ with $A\,=\,\widehat{\cO_{X,x}}$. Let
$U_y$ denote the group of local $\Gamma_y$--$G$ automorphisms of $F|_{N_y}$ (cf.
\cite[Definition 2.2.7]{bs}). Then by \cite[Proposition 5.1.2]{bs} we have
\begin{equation}\label{lug}
\cG|_{D_x}(A)\,=\,U_y\, .
\end{equation}
For the equality in \eqref{lug}, we need the existence of $(\Gamma,G)$ bundle $F$ only
locally on $N_y$ and not on entire $Y$.
\end{enumerate}

A different $(\Gamma,G)$ bundle $F'$ of type $\tau$ will, in general, provide a 
different (cf. \eqref{sbs}) isomorphism $\alpha_{F'}\,:\, \cM_Y^{\tau}(\Gamma,G)\,
\lra \, \cM_X(\cG)$.

\section{Change of weights under a homomorphism}\label{variation-rep}

\subsection{The local homomorphism problem}\label{lv}

Consider the following problem. Let $A$ be an arbitrary discrete valuation ring with 
quotient field $K$. Let $G$ be semisimple and simply connected group. Let $\rho\,:\, G
\,\lra\, H$ 
be a homomorphism. Fix a maximal torus $T 
\,\subset\, G$. Then fix a weight $\theta$ in the affine apartment $\ca_{_T}$ for $T$. It
should be emphasized that $\theta$ may not be a rational point, meaning it may not
lie in the image of $Y(T) \otimes_\ZZ \QQ$. Fix a maximal torus $T_{H}\,\subset
\, H$ such that $\rho(T)\, \subset\, T_H$. Via the identification between $\ca_{_T}$ and
$Y(T) \otimes_\ZZ \RR$ mentioned in Section \ref{se2.1}, we have a linear map
\begin{equation}\label{map-apt}
\ca_{_T} \,\lra\, \ca_{_{T_H}}
\end{equation}
between the apartments. Let $\theta_H$ denote the image of $\theta$ under this map. We 
wish to construct, through $\rho$, a canonical homomorphism of group schemes over 
$\spec(A)$: from $\cG_\theta$ corresponding to $\theta$ to $\mathcal{G}_{\theta_H}$ 
corresponding to $\theta_H$.

\subsection{Facets of a homomorphism}\label{facets}

Let $\rho\,:\, G\,\lra\, H$ be a homomorphism. The affine roots of $H$ give affine 
functionals on $\ca_{_{T_H}}$ (cf. Section \ref{prelim}). These functionals are defined 
over the rationals. By (\ref{map-apt}) it follows that the linear map of apartments 
$\ca_{_T}\,\lra\, \ca_{_{T_H}}$ corresponding to $\rho$ is also defined over the 
rationals. Indeed, it is induced by the algebraic map $\rho\vert_T\, :\, T\,\lra\, T_H$. 
Via $\ca_{_T}\,\lra\, \ca_{_{T_H}}$, we view the above affine functionals on 
$\ca_{_{T_H}}$, associated to the affine roots of $H$, as affine functionals on 
$\ca_{_T}$. Take the union of these affine functionals with the usual affine functionals 
on $\ca_{_T}$ corresponding to the affine roots of $G$. Note that all these functionals 
are defined over rationals. We shall call them as $\rho$--functionals.

\begin{definition} An {\it affine wall}
of $\rho$ is the zero locus of a $\rho$--functional. For any $x\,\in\,
\ca_{_T}$, define $Z_x$ to be the set of all affine functionals
on $\ca_{_T}$ vanishing at $x$. Define
$$\mathcal{H}_n\,:=\, \{ x\,\in\,\ca\,\mid\, \quad |Z_x|\,=\,n\}\, .$$
A {\it facet} of $\rho$ is a connected component of $\mathcal{H}_n$ for some $n\,\geq\, 0$.
\end{definition}

Notice that for the identity homomorphism $G\,\lra\, G$, we have just got the usual 
facets. By the theory of buildings, one knows that the usual affine walls of $G$ 
provide a decomposition of $\ca_{_T}$ (cf. (\ref{simpdec})).

We claim that more generally, the facets of a homomorphism $\rho\,:\, G\,\lra\, H$ 
provide an even finer decomposition of $$\ca_{_T}\,=\, \bigsqcup_n \mathcal{H}_n\, ;$$ 
here $n$ varies over a finite set. To prove this claim, first note that the set of all 
$\rho$--affine walls corresponding to $\rho$--functionals form a locally finite set. 
Indeed, this follows because any compact set $C$ of $\ca_{_T}$ meets finitely many usual 
affine walls in $\ca_{_T}$. Now under the map $\ca_{_T}\,\lra\, \ca_{_{T_H}}$, the image 
of $C$ being compact, meets finitely many affine walls of $\ca_{_{T_H}}$. Thus $C$ meets 
only finitely many walls of $\rho$.

\begin{proposition}\label{close}
Let $\rho\,: \,G \,\lra\, H$ be a homomorphism of algebraic groups. Given a weight
$\theta\,\in\, \ca_{_T}$, there exists a rational weight $\eta \,\in \,\ca_{_T}$ lying in the
same facet as $\theta$ such that $\eta_H$ and $\theta_H$ also lie in a
common facet of $\ca_{_{T_H}}$.
\end{proposition}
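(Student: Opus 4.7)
The plan is to produce $\eta$ as a rational point of $\ca_{_T}$ lying in exactly the same $\rho$-facet as $\theta$, in the sense of Section~\ref{facets}. This will suffice, because the $\rho$-facet decomposition of $\ca_{_T}$ refines both the usual $G$-facet decomposition of $\ca_{_T}$ and the pullback, via (\ref{map-apt}), of the $H$-facet decomposition of $\ca_{_{T_H}}$. Consequently any $\eta$ in the $\rho$-facet of $\theta$ will automatically lie in the same $G$-facet as $\theta$, and its image $\eta_H$ will lie in the same facet of $\ca_{_{T_H}}$ as $\theta_H$.

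First I would record that every $\rho$-functional is defined over $\QQ$: the affine roots $\alpha+n$ of $G$ are rational by construction, and the linear map $\ca_{_T}\lra\ca_{_{T_H}}$ is induced by the $\ZZ$-linear homomorphism $Y(T)\lra Y(T_H)$ coming from $\rho|_{T}$, so pullbacks of affine roots of $H$ are again rational functionals on $\ca_{_T}$. Let $Z_\theta$ denote the finite set of $\rho$-functionals vanishing at $\theta$; finiteness holds because for each root of $G$ or $H$ at most one of its integer translates can vanish at $\theta$. Then
$$V_\theta\,:=\,\bigcap_{f\in Z_\theta}\{x\in\ca_{_T}\,\mid\, f(x)=0\}$$
is a nonempty rational affine subspace of $\ca_{_T}$ containing $\theta$.

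Next, by the local finiteness of $\rho$-walls established just before the statement, I would pick an open neighborhood $U$ of $\theta$ in $V_\theta$ that meets no $\rho$-wall other than those already containing $V_\theta$. Every $x\in U$ then satisfies $Z_x=Z_\theta$, and so lies in the same connected component of $\mathcal{H}_{|Z_\theta|}$ as $\theta$, i.e.\ in the $\rho$-facet of $\theta$. To conclude I would invoke density of rational points in a rational affine subspace: if $V_\theta=\{\theta\}$, then $\theta$ is the unique solution of a rational linear system and is therefore already rational, so one takes $\eta=\theta$; otherwise $V_\theta$ is positive dimensional, its rational points are dense, and any $\eta\in U\cap(Y(T)\otimes_{\ZZ}\QQ)$ does the job. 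The only real difficulty is conceptual, namely setting up the correct refined facet structure on $\ca_{_T}$ attached to $\rho$; once this is in hand, rationality of the defining $\rho$-functionals together with density of $\QQ$-points in a $\QQ$-affine subspace finishes the argument.
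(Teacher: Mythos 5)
Your proposal is correct and follows essentially the same route as the paper: both arguments rest on the rationality of the $\rho$--functionals, the observation that a non-rational $\theta$ cannot lie in a zero-dimensional $\rho$--facet, and the choice of a rational point in the (positive-dimensional) $\rho$--facet of $\theta$, which then automatically settles the statements in $\ca_{_T}$ and $\ca_{_{T_H}}$ because the $\rho$--facet decomposition refines both. Your write-up merely makes explicit what the paper leaves implicit (local finiteness of the $\rho$--walls and density of $\QQ$--points in the rational affine subspace $V_\theta$), so no gap to report.
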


\begin{proof} 
It can be shown that if the element $\theta \,\in\, \ca_{_T}$ lies in a zero dimensional 
facet, then it must be a rational point. Indeed, this follows from that fact that we are 
looking at common zero locus of a set of rational functionals. Taking contrapositive of 
the last statement, if $\theta$ is not rational then a $\rho$--facet of $\theta$ cannot 
be zero dimensional. So we can find a rational weight $\eta$ in it.

Now by construction, both $\eta$ and $\theta$ lie in the same facet of $\ca_{_T}$, and also 
$\theta_H$ and $\eta_H$ lie in a common facet of $\ca_{_{T_H}}$.
\end{proof}

\begin{remark}\label{closerational} Henceforth, when we say that a {\it rational weight
$\eta$ is close to $\theta$ with respect to $\rho\,~:~\, G\,\lra~\, H$},
we mean a {\it rational} weight in the sense of Proposition \ref{close}. 
\end{remark}

The constructions in the proof
of Proposition \ref{close} can be extended to the context of a finite set
of representations $\{ \rho_i\,:\, G\,\lra\, {\rm GL}(V_i) \}_{i \leq n}$ of $G$. 

Now further assume that $\eta$ and $\theta$ are actually interior points of
one $\rho$--facet. This assumption implies that
they define isomorphic Bruhat--Tits group schemes with generic fiber $G$ and also 
group schemes with generic fiber $H$. There are canonical homomorphisms between 
them by the following proposition.

\begin{proposition}\label{funct}
Let $\rho\,:\, G\,\lra\, H$ be given. Let $\theta \,\in\, \ca_{_T}$ be a weight with image
$\theta_H$ under the map of apartments $\ca_{_T}\,\lra\, \ca_{_{T_H}}$. Then there is a canonical
homomorphism of group schemes $\cG_{_\theta} \,\lra\, \mathcal{G}_{_{\theta_H}}$ over $\spec(A)$.
\end{proposition}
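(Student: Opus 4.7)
The plan is to reduce to the rational case via Proposition \ref{close}, and then treat the rational case using the invariant direct image construction of Section \ref{mtbs}. Since $\cG_\theta \lra \spec(A)$ is already local, the entire construction takes place over $\spec(A)$.

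For rational $\theta$, let $d$ be the smallest positive integer with $d\theta \,\in\, Y(T)$; then $d\theta_H \,=\, (\rho\vert_T)_*(d\theta) \,\in\, Y(T_H)$, so the same denominator works on the $H$-side. Consider the Kummer cover $p\,:\,\spec(B) \,\lra\, \spec(A)$ with $B \,=\, \CC[[s]]$, $s^d\,=\,t$, and $\Gamma \,=\, \mu_d$ acting via $s \,\mapsto\, \zeta s$. The cocharacter $d\theta$ restricts to a homomorphism $\tau_\theta\,:\, \mu_d \,\lra\, T \,\subset\, G$, and by \cite[\S 5]{bs} the parahoric group scheme admits the description $\cG_\theta \,\cong\, p_*^\Gamma({}_{F_\theta}G)$, where ${}_{F_\theta}G$ is the trivial group scheme $G \times \spec(B)$ equipped with the $\Gamma$-action by conjugation via $\tau_\theta$. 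The analogous description $\cG_{\theta_H} \,\cong\, p_*^\Gamma({}_{F_{\theta_H}}H)$ uses $\tau_{\theta_H} \,=\, \rho \circ \tau_\theta$. The identity $\rho(\tau_\theta(\zeta) g \tau_\theta(\zeta)^{-1}) \,=\, \tau_{\theta_H}(\zeta) \rho(g) \tau_{\theta_H}(\zeta)^{-1}$ then shows that $\rho\,:\, G \,\lra\, H$ is $\Gamma$-equivariant for the twisted actions, and applying the functor $p_*^\Gamma$ produces the desired morphism $\cG_\theta \,\lra\, \cG_{\theta_H}$.

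For general (possibly irrational) $\theta$, I invoke Proposition \ref{close} to select a rational $\eta$ lying in the same $\rho$-facet as $\theta$. Because the integers $m_r(\cdot) \,=\, -[(r,\cdot)]$ are constant on facets of $\ca_T$, the parahoric subgroups $\mathscr{P}_\eta$ and $\mathscr{P}_\theta$ coincide, whence $\cG_\eta \,=\, \cG_\theta$; the analogous reasoning, using Proposition \ref{close}'s guarantee that $\eta_H$ and $\theta_H$ share a facet of $\ca_{T_H}$, gives $\cG_{\eta_H} \,=\, \cG_{\theta_H}$. Transporting the rational morphism yields the desired $\cG_\theta \,\lra\, \cG_{\theta_H}$. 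Canonicity is established by noting that any two rational approximations $\eta,\,\eta'$ in a common $\rho$-facet produce the same morphism, since the twisting data $\tau_\eta,\,\tau_{\eta'}$ differ only by data invisible to the invariant direct image functor.

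The principal obstacle is the rational case, specifically the verification that $\rho$ intertwines the $\mu_d$-twistings via the compatibility $\rho \circ \tau_\theta \,=\, \tau_{\theta_H}$, which itself follows formally from $\theta_H \,=\, \rho_*(\theta)$. The general case then passes with almost no new content: the very definition of $\rho$-facets ensures that the $G$-facet of $\theta$ and the $H$-facet of $\theta_H$ are simultaneously constant, which is precisely the condition needed to transport the rational construction to arbitrary weights.
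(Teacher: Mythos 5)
Your argument is correct and in substance follows the paper's own route: approximate $\theta$ by a rational weight $\eta$ in the same $\rho$--facet using Proposition \ref{close} (so that the sign pattern, hence the integers $m_r$, are unchanged, giving $\eP_{\eta}=\eP_{\theta}$ and $\eP_{\eta_H}=\eP_{\theta_H}$ and therefore equalities of the corresponding Bruhat--Tits group schemes), and then produce the morphism in the rational case from the $(\Gamma,G)$--bundle description of \cite{bs}. The one methodological difference is the last step: the paper works at the level of $A$--valued points, mapping ${\rm Aut}_{(\Gamma,G)}(F)$ to ${\rm Aut}_{(\Gamma,H)}(F(H))$ and then invoking the \'etoff\'e property of Bruhat--Tits group schemes to extend this to a morphism $\cG_{\eta}\lra\cG_{\eta_H}$ over $\spec(A)$, whereas you realize both group schemes as invariant direct images along an explicit Kummer cover and apply $p_*^{\Gamma}$ to the $\mu_d$--equivariant homomorphism $G\times \spec(B)\lra H\times\spec(B)$, which yields the group--scheme morphism directly and bypasses the \'etoff\'e argument, at the cost of using the identification $\cG_{\eta}\cong p_*^{\Gamma}({}_{F_\eta}G)$ at the level of group schemes rather than only on $A$--points (this is available, cf.\ \eqref{gpscheqn} and \cite[Theorem 2.3.1]{bs}, including the ${\rm GL}(V)$ case via Remark \ref{usedlater}). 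The only soft spot is your justification of independence of the choice of $\eta$ (``data invisible to the invariant direct image functor'' is not an argument); a clean repair is to note that any two such morphisms restrict to the base change of $\rho$ on the generic fibre, and since $\cG_{\theta}$ is flat over the discrete valuation ring $A$ and $\cG_{\theta_H}$ is separated, the generic fibre is schematically dense, so the morphisms coincide --- equivalently, compare them on $A$--points and invoke the \'etoff\'e property exactly as the paper does.
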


\begin{proof}
Let us recall a characterization of parahoric groups
when the weight $\eta$ lies in $\ca_{_\bq}$ \cite[Theorem 2.3.1]{bs}. In this case, by
\eqref{lug}, there 
exists a ramified Galois cover $$p\,:\,\spec(B)\,\lra\, \spec(A)$$ with Galois group
say $\Gamma$, and a $(\Gamma,G)$ bundle $F \,\lra\, \spec(B)$, such that
\begin{equation}\label{Avp}
\cG_{_{\Omega(A)}}\,=\, \cG_{_\eta}(A) \,=\, {\rm Aut}_{_{(\Gamma,G)}}(F) \, .
\end{equation}

Choose a rational weight $\eta$ close to $\theta$ with respect to $\rho$. This gives an 
element $\eta_H$ close to $\theta_H$. Further, let $$F(H)\,:=\,F \times_G H$$ be the 
$(\Gamma,H)$ bundle obtained by extending structure group using $\rho$. Via the natural 
homomorphism ${\rm Aut}_{_{(\Gamma,G)}}(F) \, \lra \,{\rm Aut}_{_{(\Gamma,H)}}(F(H))$ and 
the equalities $ \cG_{_\eta}(A)\,=\, {\rm Aut}_{_{(\Gamma,G)}}(F)$ and $ 
\cG_{_{\eta_H}}(A)\,=\,{\rm Aut}_{_{(\Gamma,H)}}(F(H))$, we obtain a map of parahoric 
groups $$ \rho_K\,:\, \cG_{_\eta}(A) \,\lra\, \cG_{_{\eta_H}}(A)\, .$$ Now the 
characterizing property of the Bruhat--Tits group schemes is that they are {\em 
\'etoff\'e}, which means that any morphism at the level of group schemes over $\spec(A)$ 
is determined completely by the $A$--valued points alone (cf. \cite[Definition 
1.7.1]{bt}).

Thus we get a morphism over $\spec(A)$ of group schemes 
\begin{equation}
\mathcal{G}_{_\eta}\,\lra\,\mathcal{G}_{_{\eta_H}}
\end{equation}
extending (\ref{Avp}). Since $\eta_H$ is close to $\theta_H$,
it follows that $\cG_{_{\eta_H}}(A)\,=\,\cG_{_{\theta_H}}(A)$, and hence
we have an induced isomorphism of group 
schemes $\cG_{_{\eta_H}}\,\lra\, \cG_{_{\theta_H}}$ on $\spec(A)$ which gives 
the homomorphism $\rho_{_\theta}\,:\,\mathcal{G}_{_\theta}\,\lra\, \cG_{_{\theta_H}}$.
\end{proof}

To the best of our knowledge, Proposition \ref{funct} is not available in the papers of 
Bruhat and Tits. Alternatively it can be proved using the general framework of 
functoriality of buildings as in \cite[Theorem 2.1.8]{L} and \cite[Theorem 2.2.1]{L}.

\section{Associated Constructions}\label{ac}

As before, $G$ is semisimple and simply connected. Let
$\rho \,:\, G\,\lra\, {\rm GL}(V)$ be a rational representation. Fix a maximal torus
$T \,\subset\, G$, and also fix a maximal torus 
$T_V \,\subset\, {\rm GL}(V)$ containing $\rho(T)$. Let $\ce$ be a parahoric
$\cg_{_{\boldsymbol\theta,X}}$--torsor on $X$ with weights $\boldsymbol\theta$. 

\subsection{The construction}\label{cons-ext-stgp}

As in (\ref{parabolicpoints}), fix a finite subset $D\,=\,\{x_j\}_{j=1}^m \, 
\subset\, X$. For each parahoric group scheme $\cg_{_j}\,\lra\, D_{_j} \,=\, 
\spec(A_{_j})$, $1\, \leq\, j\, \leq\, m$, we get a facet $\Omega_{_j}$ in the 
Bruhat--Tits building $ \cb(G)$. Any weight $\boldsymbol\theta\,\in\, \ca_{_{T}}^m$ 
induces a weight ${\boldsymbol\theta_V}$ in $\ca_{_{T(V)}}^m$ (see Section 
\ref{facets}).

The group scheme ${{\cg}_{_{{\boldsymbol\theta},X}}}$ on $X$ is obtained by
gluing $\cg_{_{\theta_{_j}}}$, for every $1\, \leq\, j\, \leq\, m$, with
$$\cg|_{_{X \setminus D}}\,\simeq
\,(X \setminus D) \times G\, .$$ Using the same gluing data via the representation $\rho$ one immediately gets a 
Bruhat--Tits group scheme $\mathcal{G}\mathcal{L}_{\theta_V,X}$, and using locally Proposition \ref{funct}, we see that $\rho$ gives a natural global homomorphism of group schemes over $X$ 
\begin{equation}\label{assocparabstructure}
\rho_{_X}\,:\,{{\cg}_{_{{\boldsymbol\theta},X}}} \,\lra\,
\mathcal{G}\mathcal{L}_{\theta_V,X}\, .
\end{equation}
Using $\rho_{_X}$ one gets the standard construction of extension of structure groups. 

By Remark \ref{usedlater}, via $\rho$ one obtains an associated parabolic vector
bundle $$\cE_{V,\boldsymbol\theta_V}\,:=\, (\ce(V),\, \boldsymbol\theta_V)$$
with parabolic weights $\boldsymbol\theta_V$. We will mostly need to apply associated
constructions under the adjoint homomorphism
$Ad\,:\, G \,\lra\, {\rm GL}(\gfr)$. 

\subsection{Tensor product of parabolic vector bundles}

Assume that we have homomorphisms $\rho_1\,:\, G \,\lra\, {\rm GL}(V)$ and $\rho_2\,:\,
G \,\lra\, {\rm GL}(W)$, and let $$\rho_1 \otimes \rho_2\,:\, G\,\lra\, {\rm GL}
(V \otimes W)$$ be their tensor product. Therefore, for a parahoric torsor $(\cE,\,
\theta)$ we get parabolic vector bundles $\cE_{V,\boldsymbol\theta_V}$,
$\cE_{W,\boldsymbol\theta_W}$, and $\cE_{V \otimes W,\boldsymbol\theta_{V \otimes W}}$.

When the weights $\boldsymbol\theta$ are rational numbers then, by $(\Gamma,G)$ bundle 
theory, we have a canonical isomorphism of parabolic vector bundles \cite{MY}: 
$$\cE_{V,\boldsymbol\theta_V} \otimes^p \cE_{W,\boldsymbol\theta_W} \,\simeq\, 
\cE_{V \otimes W,\boldsymbol\theta_{V \otimes W}}\, ,$$ where $\otimes^p$ is 
parabolic tensor product. The following proposition, which extends this isomorphism to 
the case of {\it real} weights, is immediate by observing that the quasi--parabolic 
bundle for a parabolic tensor product does not change under sufficiently small
change of the parabolic weights of the factors, while the parabolic weights
of the parabolic tensor product are given by the parabolic weights of the factors
using a standard algebraic formula.

\begin{proposition}\label{canisoparten}
 Let $\theta \,\in\, \ca_{_T}$ be arbitrary. Then there is a canonical isomorphism
$$
\cE_{V,\boldsymbol\theta_V} \otimes^p \cE_{W,\theta_W}\,\simeq\,
\cE_{V \otimes W,\theta_{V \otimes W}}
$$
of parabolic vector bundles.
\end{proposition}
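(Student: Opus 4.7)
The plan is to reduce to the rational case by approximation, then check that both sides of the claimed isomorphism vary in a controlled way as the weights are perturbed. Concretely, I would proceed as follows.

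First, apply Proposition \ref{close}, in the multi-representation form indicated in the remark following Remark \ref{closerational}, simultaneously to the three homomorphisms $\rho_1\,:\,G\to {\rm GL}(V)$, $\rho_2\,:\,G\to {\rm GL}(W)$ and $\rho_1\otimes\rho_2\,:\, G\to {\rm GL}(V\otimes W)$. This produces a rational weight $\eta\in\ca_{_T}$ which lies in the same $\rho_i$--facet as $\theta$ for each $i$, so that $\eta_V,\eta_W,\eta_{V\otimes W}$ lie in the same facets of the corresponding apartments as $\theta_V,\theta_W,\theta_{V\otimes W}$ respectively. By Proposition \ref{funct} (applied locally at each parabolic point and then glued as in \eqref{assocparabstructure}), the Bruhat--Tits group schemes, and hence the quasi--parabolic structures of the associated vector bundles, coincide for $\eta$ and $\theta$; only the weights themselves change.

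Next, since $\eta$ is rational, the $(\Gamma,G)$--bundle description recalled in Section \ref{mtbs} is available. The standard compatibility of associated constructions with tensor products for $(\Gamma,G)$--bundles, together with the formula relating $(\Gamma,G)$--bundles to parabolic bundles under invariant direct image, gives the canonical isomorphism
\begin{equation*}
\cE_{V,\boldsymbol\eta_V} \otimes^p \cE_{W,\boldsymbol\eta_W} \,\simeq\, \cE_{V\otimes W,\boldsymbol\eta_{V\otimes W}}
\end{equation*}
as in \cite{MY}. This handles the rational case.

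Finally, I would argue that this isomorphism carries over verbatim to $\theta$. The underlying bundle on the left-hand side only depends on the two quasi-parabolic structures and the combinatorial type of the parabolic filtrations on each factor; these are unchanged as we move $\eta$ to $\theta$ inside the common $\rho_i$--facets. Similarly, the quasi-parabolic structure of the right-hand side is determined by the facet containing $\theta_{V\otimes W}$, which agrees with the one containing $\eta_{V\otimes W}$. What does change is the numerical data, but the parabolic weights on $\cE_{V\otimes W,\boldsymbol\theta_{V\otimes W}}$ are given by the well-known algebraic formula in terms of the weights on the factors (obtained by sorting the sums $\alpha_i+\beta_j$ into $[0,1)$ via the $\ZZ$--shifts forced by the tensor product filtration). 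This formula is the same integer combinatorial recipe applied to $(\boldsymbol\eta_V,\boldsymbol\eta_W)$ and to $(\boldsymbol\theta_V,\boldsymbol\theta_W)$ as long as one stays inside the common facets, so the isomorphism obtained at $\eta$ is simultaneously an isomorphism of parabolic bundles at $\theta$.

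The main technical point, and the only place where care is required, is the simultaneous approximation step: one must ensure that the rational weight $\eta$ lies in a common facet with $\theta$ with respect to all three of $\rho_1,\rho_2,\rho_1\otimes\rho_2$ so that no quasi-parabolic jumps are introduced when passing from $\eta$ back to $\theta$. Granted the multi-representation version of Proposition \ref{close}, everything else is essentially formal.
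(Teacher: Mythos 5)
Your proposal is correct and follows essentially the same route as the paper: the rational case via the $(\Gamma,G)$--bundle description and \cite{MY}, then the observation that within a common facet neither the quasi--parabolic data nor the algebraic formula giving the tensor-product weights changes, so the isomorphism persists for real $\theta$. The paper treats this as immediate from that observation; your version merely makes the approximation step explicit via the multi-representation form of Proposition \ref{close}, which is a reasonable way to justify the "sufficiently small change of weights" claim.
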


Let $V$ and $W$ be two representations of $G$ and $\psi\,:\, V\,\lra\, W$ a
$G$--equivariant homomorphism. Let $(\ce,\, \boldsymbol\theta)$ be a parahoric
torsor. By identifying the $G$--module $Hom(V,W)$ with $W \otimes V^*$ it follows
that $\psi$ induces a homomorphism
\begin{equation}\label{ass-con-real}
(\cE,{\boldsymbol\theta})(\psi)\,:\,\cE_{V,\boldsymbol\theta_V}\,\lra\, \cE_{W,\boldsymbol\theta_W}\, .
\end{equation}

\subsection{Lie algebra bundle of a $\cG$-torsor}\label{lub}

Consider a parahoric $\cG_{\boldsymbol\theta}$--torsor $(\ce,\,
\boldsymbol\theta)$, where $\boldsymbol\theta$ is a system of real weights. We define
a parabolic Lie bracket operation on $\ce(\mathfrak g)$ as follows. The Lie bracket
$[,]\,:\, \gfr \otimes \gfr\,\lra\, \gfr$ is a $G$--equivariant homomorphism for the
adjoint action of $G$, and hence by \eqref{ass-con-real} we have a homomorphism of
parabolic vector bundles
$$\cE_{[,]}\,:\, \cE(\gfr \otimes \gfr)\,\lra\, \cE(\gfr)\, .$$

Now Proposition \ref{canisoparten} gives an isomorphism $$\cE(\gfr) \otimes^p \cE(\gfr)
\,\stackrel{\sim}{\longrightarrow}\, \cE(\gfr \otimes \gfr)$$ of parabolic vector
bundles. Combining with $\cE_{[,]}$, the Lie bracket can be defined as the parabolic homomorphism 
\begin{equation}\label{parhom}
[.,\, .]\,:\, \cE(\gfr) \otimes^p_X \cE(\gfr)\,\lra\, \cE(\gfr)\, .
\end{equation}

\section{Semistability and stability of torsors}\label{ss-s}

Let ${\mathcal G}_{_{\Omega,X}}$ be a Bruhat--Tits group scheme on $X$ as in 
Definition \ref{ht1}, and let $\boldsymbol\theta$ be such that ${\mathcal 
G}_{_{\Omega,X}} \,\simeq\, {{\mathcal G}_{_{{\boldsymbol\theta},X}}}$. Let 
$(\ce,\,\boldsymbol\theta)$ be a parahoric ${{\mathcal 
G}_{_{{\boldsymbol\theta},X}}}$--torsor with arbitrary real weights
$\boldsymbol\theta \,\in\, \ca_{_T}^{^m}$.

\begin{remark}\label{sgaremark}
Let $G_{_K}$ be a split group scheme over a field $K$. Let $E_{_K}$ be a 
$G_{_K}$--torsor. Consider the twisted group scheme $E(G)_{_K}$. Then
giving a parabolic subgroup scheme $$P_{_K} \,\subset\, E(G)_{_K}$$ (of fiber type 
$P$) is equivalent to giving a reduction of structure group of $E_{_K}$ to $P$ 
(cf. \cite[Expos\'e XXVI, Cor. 3.6]{sga}).
\end{remark}

\subsection{First definition}\label{fc} 

Let $\ce(\mathcal{G})$ denote the group scheme of automorphisms of $\ce$ obtained by
taking the quotient $\cE \times_X \cG$ by the left $\cg$--action on $\cE$ and the right
$\cG$--action on itself by conjugation. Let ${\rm Lie}(\ce(\mathcal G))$ denote the Lie algebra bundle of $\ce(\mathcal{G})$. One has the following well-known identification of Lie algebra bundles:
\begin{equation}\label{siso}
{\rm Lie}(\ce(\cG)) = \cE(\gfr) \,.
\end{equation}
Since $(\ce,\,\boldsymbol\theta)$ is a parahoric torsor, by Section \ref{lub}, the
above Lie algebra bundle $\cE(\gfr)$ gets a natural parabolic Lie algebra bundle
structure $(\cE(\gfr),\,\theta_\gfr)$. Thus via the isomorphism \eqref{siso} the
bundle ${\rm Lie}(\cE(\cG))$ gets a parabolic vector bundle structure with a Lie bracket
operation compatible with the parabolic structure:
\begin{equation}
({\rm Lie}(\cE(\cG))\, ,\,\theta_\gfr\, ,\,[.\, ,.\,])\,.
\end{equation}
By Remark \ref{sgaremark}, giving a generic parabolic reduction is equivalent to giving a parabolic subgroup scheme $$P_{_K} \subset \ce({\mathcal G}_{_K}).$$

For each $1\, \leq\, j\, \leq\, m$, we take the flat closure of $P_{_K}$ in 
$\ce({\mathcal G}_{_{A_{_j}}}).$ This will give a subgroup scheme $\eP \,\subset\,
\ce({\mathcal G}).$

The extended subgroup scheme $\eP$ also gives a Lie subalgebra bundle $${\rm Lie}(\eP) 
\,\subset\, {\rm Lie}(\cE(\cG)) \,\simeq\, \cE(\gfr)\, .$$ Then endow the bundle ${\rm Lie}(\eP)$ 
with the {\em canonical induced parabolic structure} on the divisor $D$ and 
denote this parabolic subbundle by ${\rm Lie}(\eP)_{_*}$.

\begin{definition}\label{newss}
We say that the parahoric torsor $(\ce,\,\boldsymbol\theta)$ with arbitrary real weights
$\boldsymbol\theta \,\in\, \ca_{_T}^{^m}$ is semistable 
(respectively, stable) as a ${\mathcal G}_{_{\boldsymbol\theta,X}}$--torsor if for
every generic parabolic reduction datum as above,
$$
{\rm par.deg}({\rm Lie}(\eP)_{_*}) \,\leq\, 0\ \ {\rm (respectively,}\ \ 
{\rm par.deg}({\rm Lie}(\eP)_{_*})\,<\, 0{\rm )}\, .
$$
\end{definition}

\subsection{Second definition}\label{2construction}

Now assume that $(\ce,\,\boldsymbol\theta)$ is a parahoric torsor with {\em rational
weights}. Let ${P}_K\,\subset\, {\mathcal G}_K$ be a maximal parabolic subgroup of the
generic fiber ${\mathcal G}_K$ of ${\mathcal G}_{_{\Omega,X}}$. Let $\chi\,:\, { P}_K \,
\longrightarrow\,{\mathbb G}_{m,K}$ be a strictly anti-dominant character of the
parabolic subgroup ${ P}_K$. Therefore, the associated line bundle on ${\mathcal
G}_K/{ P}_K$ is ample. Since the quotient map ${\eE}_K\, \longrightarrow\,
{\eE}_K/{ P}_K$ defines a principal ${ P}_K$--bundle, it follows that
$\chi$ defines a line bundle $L_{_\chi}$ on ${\eE}_K/{ P}_K\,=\,
{\eE}_K({\mathcal G}_K/{ P}_K)$. For any reduction of structure group
$$
s_K\, :\, X\setminus D\, \longrightarrow\, {\eE}_K({\mathcal G}_K/{ P}_K)\, ,
$$
we have the pulled back line bundle $s_K^{*}(L_{_\chi})$ on $X\setminus D$.
This line bundle $s_K^{*}(L_{_\chi})$ extends $X$ by the following 
proposition.

\begin{proposition}[{\cite[Proposition 6.3.1]{bs}}]\label{better} 
Let $(\eE,\,\boldsymbol\theta)$ be a parahoric torsor with rational weights. Let $s_K$ be a generic reduction of
structure group of ${\eE}_K$ to ${ P}_K$. Then the line bundle $s_K^{*}(L_{_\chi})$ on
$X\setminus D$ has a canonical extension $L_{_\chi}^{\boldsymbol\theta}$ to $X$ as a parabolic line bundle.
\end{proposition}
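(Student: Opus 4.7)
The plan is to exploit the hypothesis of \emph{rational} weights by passing to a ramified Galois cover $p : Y \lra X$ where the parahoric data becomes an ordinary $(\Gamma,G)$--bundle, extending the reduction across the ramification locus there, and finally descending the associated line bundle back to $X$ as a parabolic line bundle via the standard correspondence between $\Gamma$--equivariant line bundles on $Y$ and parabolic line bundles on $X$ (cf.\ Remark \ref{usedlater} and \cite{bis}).

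First, invoke Section \ref{mtbs} to obtain a ramified Galois cover $p : Y \lra X$ with Galois group $\Gamma$, a reference $(\Gamma,G)$--bundle $F$, and a $(\Gamma,G)$--bundle $E$ on $Y$ such that $\cE \simeq p_*^\Gamma(\mathrm{Isom}_Y(E,F))$. Since $\cG|_{X\setminus D}$ is the split group scheme $G \times (X\setminus D)$ and $p$ is \'etale over $X\setminus D$, the generic reduction $s_K$ of $\cE_K$ to $P_K$ pulls back to a $\Gamma$--equivariant $P$--reduction of $E|_{Y\setminus D_Y}$. Because $Y$ is a smooth projective curve and $G/P$ is projective, this rational section of $E(G/P)$ extends uniquely to a $\Gamma$--equivariant section $\widetilde{s} : Y \lra E(G/P)$, yielding a $\Gamma$--equivariant principal $P$--subbundle $E_P \subset E$ over all of $Y$.

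Next, form the $\Gamma$--equivariant line bundle $\widetilde{L} := E_P \times^P \CC_{\chi}$ on $Y$; by construction its restriction to $Y \setminus D_Y$ is canonically identified with $p^*(s_K^*(L_{_\chi}))$. Applying the Biswas correspondence between $\Gamma$--equivariant line bundles on $Y$ and parabolic line bundles on $X$ with rational weights whose denominators divide the ramification indices $d_{x_j}$, the invariant direct image yields a parabolic line bundle $L_{_\chi}^{\boldsymbol\theta}$ on $X$ whose underlying ordinary line bundle on $X\setminus D$ is $s_K^*(L_{_\chi})$. This is the required extension.

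The main obstacle is \emph{canonicity}: one must verify that $L_{_\chi}^{\boldsymbol\theta}$ depends only on the intrinsic data $(\cE,\boldsymbol\theta)$, $P_K$, and $\chi$, and not on the auxiliary cover $p$ or the twisting $(\Gamma,G)$--bundle $F$. Each individual step (pullback, flat closure in $E(G/P)$, extension of structure group via $\chi$, invariant direct image) is functorial, so canonicity reduces to the fact that the equivalence in Section \ref{mtbs} between parahoric torsors and $(\Gamma,G)$--bundles of type $\tau$ is canonical up to isomorphism. A local computation on each $\spec(A_j)$ then identifies the resulting parabolic weight at $x_j$ as $\langle \chi,\theta_j\rangle$, matching the prediction obtained by applying Proposition \ref{funct} to the composition $\cG_{_{\theta_j}} \supset P \xrightarrow{\chi} \GG_m$; this local identification, and its independence of $F$, is the sole delicate point, after which the statement is formal.
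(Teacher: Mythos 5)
Your proposal is correct and follows essentially the same route as the source of this statement: the paper itself gives no proof, quoting it as \cite[Proposition 6.3.1]{bs}, and the argument there is exactly your one --- for rational weights pass to the ramified Galois cover where the torsor becomes a $(\Gamma,G)$--bundle, extend the pulled-back $P$--reduction across the ramification points using properness of $G/P$ on the smooth curve $Y$, form the $\Gamma$--equivariant line bundle associated to $\chi$, and take its invariant direct image as a parabolic line bundle on $X$. The point you flag (independence of the auxiliary cover and of $F$, with the local weight at $x_j$ given by pairing $\chi$ with $\theta_j$ up to sign convention) is likewise how canonicity is settled in that reference, so nothing essential is missing.
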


\begin{definition}[{\cite[Definition 6.3.4]{bs}}]\label{stability}
A parahoric torsor $(\eE,\, {\boldsymbol\theta})$ with rational weights is called {\em
stable (respectively, semistable)} if for every maximal parabolic ${\mathcal P}_K \,
\subset\, {\mathcal G}_K$, for all strictly anti-dominant character $\chi$ of
${\mathcal P}_K$, and for every reduction of structure group $s_K$ as above,
\[
{\rm par.deg}(L_{_\chi}^{\boldsymbol\theta}) \,<\, 0\ \ {\rm (respectively,}\ 
{\rm par.deg}(L_{_\chi}^{\boldsymbol\theta})
\,\leq\, 0{\rm )}\, .
\]
\end{definition}

We observe that the two definitions, namely Definition \ref{newss} and Definition 
\ref{stability}, are equivalent when the weights are rational; its proof is identical to 
the proof of \cite[Lemma 2.1]{Ra}.

\begin{remark}
In a recent paper, Heinloth (\cite{He2}) studies the Hilbert-Mumford criterion in terms 
of algebraic stacks. The point of view developed there allows natural choices of 
test objects for the verification of stability, leading to criteria for the existence of 
separated coarse moduli spaces.

Let $\mathcal E$ is a $\mathcal G$-torsor for a parahoric group scheme $\mathcal G$ and 
let $\mathcal B \,\subset\, Aut_{_{\mathcal G}}(\mathcal E)$. Let $\mathcal E_{_{\mathcal 
B}}$ be a reduction of structure group to $\mathcal B$. To a character $\chi$ of 
$\mathcal B$, one can then associate a line bundle $\mathcal E_{_{\mathcal B}}(\lambda)$ 
on $X$. In the language of Section \ref{2construction} this will be a parabolic line 
bundle. In \cite[Section 3.5]{He2} the classical notion of a parabolic degree of 
$\mathcal E_{_{\mathcal B}}(\lambda)$ is recovered in the new setting which leads to the 
definition of stability as in Section \ref{2construction}. An advantage with his 
new definition is that it works in positive characteristics as well and moreover, the 
parahoric group scheme $\mathcal G$ need not be assumed to be generically split as is 
done in the present paper.
\end{remark}

\section{(Semi)stability and polystability under variation of 
weights}\label{variation-ss}

Let $\eV$ be the category of all quasi--parahoric ${\mathcal 
G}_{_{\boldsymbol\theta,X}}$--torsors along $D$ with weights $\boldsymbol\theta$ varying 
in $\ca_{_T}^{^m}$. For convenience, we shall work with a {\it single} parabolic point $P 
\,\in\, X$. The generalization to finitely many points follows without any difficulty.

Let $r$ be the rank of $\cE(\gfr)_*$. The degree of the underlying vector bundle is 
denoted by $d_1$. Note that its parabolic degree is $0$ because $\gfr\,=\, 
{\gfr}^*$ as $G$--modules. Let $\eV_\gfr$ denote the space of all parabolic 
vector bundles such that
\begin{itemize}
\item the rank is the fixed integer $r$,

\item the quasi--parabolic structure at $P$ is 
given by that of $\cE(\gfr)_*$,

\item the degree of the underlying vector bundle is $d_1$,

\item the parabolic degree is zero, and

\item the parabolic weights $0 \,\leq\, \alpha_1 \,< \,\alpha_2 \,<\, \cdots 
\,<\, \alpha_{r'} \,<\,1$ are not fixed, but the length $r'$ and the multiplicities
$m_1,\, \cdots,\, m_{r'}$ are fixed.
\end{itemize}
 
Take any $V_* \,\in\, \eV_\gfr$. For a subbundle $W$ of $V$, if $n_{k}$ for each
$1 \,\leq\, k \,\leq\, r'$ denote the multiplicity of $\alpha_{k}$ for the
parabolic structure on $W$ induced by $\eV_\gfr$, the condition for
$\alpha_*$--stability of $V_*$ is
\begin{equation}\label{scondform}
{\rm degree}(W)+ \sum_{k \leq r'} n_{i_k} \alpha_{i_k} \, <\, 0
\end{equation}
for all subbundles $W$ of $V$; for semistability the strict inequality is replaced by
inequality. Let $\chi(V,W,\alpha)$ denote the left hand side of \eqref{scondform}. 

There exists a constant $C_2 \,\geq\, 0$ such that if ${\rm degree}(W) \,>\, C_2$,
then $\chi(V,W,\alpha) \,\geq\, 0$ for all $\alpha$. Thus for any $V \,\in\, \eV_\gfr$,
if the underlying vector bundle $V$ admits a subbundle $W$ with
${\rm degree}(W) \,\geq\, C_2$, then
$V_*$ can never be parabolic semistable for any choice of weights $\alpha_*$. We note
that the quasi--parabolic structure of $V \,\in\, \eV_\gfr$ alone determines such bundles.

\subsection{Facets of a quasi--parahoric torsor}

As before, $G$ is simple and simply connected.

In this subsection we shall only consider parahoric torsors $\cE \,\in\, \eV$ such that the
quasi--parabolic bundle $\cE(\gfr)$ admits no subbundle of degree greater than
$C_2$:
\begin{equation}\label{C2}
\eV_\gfr(C_2)\,:=\, \{ V_* \,\in\, \eV_\gfr\,\mid\, V~ \text{ has no subbundle of degree
greater than }~ C_2 \}\, .
\end{equation}

\begin{proposition}\label{finitesetofineq}
The set of inequalities required to verify the (semi)stability of any bundle in 
$\eV_\gfr(C_2)$ has finite cardinality.
\end{proposition}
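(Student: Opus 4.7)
The plan is to reduce the semistability test on $\eV_\gfr(C_2)$ to a finite family of linear inequalities in the weight variables, indexed by a bounded set of discrete numerical invariants attached to possible subbundles. The key observation is that, by \eqref{scondform}, the quantity
\[
\chi(V,W,\alpha)\,=\,\deg(W)\,+\,\sum_{k}n_{i_k}\alpha_{i_k}
\]
depends on the subbundle $W\subset V$ only through the triple $(\mathrm{rank}(W),\,\deg(W),\,\vec{n})$, where $\vec{n}=(n_{1},\ldots,n_{r'})$ records the multiplicity of each ambient weight $\alpha_{k}$ in the induced quasi--parabolic structure on $W$. So it suffices to show that only finitely many such triples produce a condition that is not automatically satisfied, uniformly as $V_{*}\in \eV_\gfr(C_2)$ varies.

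I would bound each coordinate of the triple separately. The rank is constrained to $\{1,\ldots,r-1\}$, a finite range. Each multiplicity satisfies $0\le n_{k}\le m_{k}$, so $\vec{n}$ lies in the finite set $\prod_{k=1}^{r'}\{0,1,\ldots,m_{k}\}$. The upper bound on $\deg(W)$ is exactly the content of the defining condition \eqref{C2} of $\eV_\gfr(C_2)$, namely $\deg(W)\le C_{2}$. For a lower bound, I would use that the admissible weights satisfy $0\le\alpha_{k}<1$ together with $\sum_{k} n_{k}=\mathrm{rank}(W)\le r$, giving $\sum_{k} n_{k}\alpha_{k}<r$; consequently, whenever $\deg(W)\le -r$, one has $\chi(V,W,\alpha)<0$ irrespective of the chosen weights, and the inequality needs no verification. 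Thus only the finite window $\deg(W)\in\{-r+1,\ldots,C_{2}\}$ can produce a genuinely non-vacuous condition.

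Combining the three bounds, the triples yielding a potentially non-trivial inequality form a finite set, and each such triple determines, via \eqref{scondform}, exactly one linear inequality in $\alpha_{*}$; this is the finite family asserted in the proposition. The one point meriting care --- and the most natural place for a small obstacle --- is to check that two subbundles (possibly of two distinct members of $\eV_\gfr(C_2)$) producing the same triple yield literally the same inequality on the weight variables, so that counting triples is the same as counting inequalities; this is transparent from \eqref{scondform}. The substantive content of the proposition is therefore the extraction, from the a priori unbounded collection of all subbundles of all $V\in\eV_\gfr(C_2)$, of the finite list of admissible numerical data described above.
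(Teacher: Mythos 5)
Your proposal is correct and follows essentially the same route as the paper: bound $\deg(W)$ above by $C_2$ via the definition of $\eV_\gfr(C_2)$, bound it below by observing that sufficiently negative degrees can never destabilize (the paper's constant $C_1$, which you make explicit as $-r$ using $0\le\alpha_k<1$ and $\sum_k n_k=\mathrm{rank}(W)$), and then note that rank and the induced multiplicities range over finite sets, so only finitely many inequalities occur. The only difference is that your lower bound is explicit where the paper merely asserts the existence of $C_1$; the content is the same.
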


\begin{proof}
{}From \eqref{scondform} we see that there exists a constant $C_1 \leq 0$ such that
$${\rm degree}(W) \,\leq\, d_1+ C_1\ \ \implies\ \chi(V,W,\alpha) \,<\,0\, .$$ In other words, subbundles of degree at most $C_1$ will never be destabilizing with respect to any inequality. 
Thus to check (semi)stability of $(\cE(\gfr),\,\theta_\gfr)$ we may restrict ourselves to subbundles $W$ of $\cE(\gfr)$ such that 
\begin{equation}
C_1 \,\leq \, {\rm degree}(W) \, \leq C_2 \,.
\end{equation}

The ranks of subbundles $W$ vary between $1$ and $r-1$. Let $m_1,\,\cdots,\, m_{r'}$
be the multiplicities of the quasi--parabolic structure on $\eV_\gfr$. The
multiplicities $n_{i_1},\, \cdots,\, n_{i_k}$ of the induced parabolic structure are
positive integers. Thus as one varies over $\eV_\gfr$, only finitely many inequalities appear. 
\end{proof}

If $\cE(\gfr) \,\in\, \eV_\gfr(C_2)$, it follows that to check (semi)stability of 
$(\ce,\,\boldsymbol\theta)$ we need to consider only finitely many inequalities 
corresponding to a (possibly proper) subset of the set of inequalities seen in 
Proposition \ref{finitesetofineq}. This is because we need to check these inequalities 
for subbundles which are Lie algebra bundles of certain subgroup schemes (see Definition 
\ref{newss}).

We fix a maximal torus $T$ of $G$ and also fix a maximal torus $T_\gfr$ of 
$GL(\gfr)$ such that $Ad(T_G) \,\subset\, T_\gfr$.

For every inequality, $${\rm degree}(W) + \sum_{k \leq r'} n_{i_k}
\alpha_{i_k} < 0\ \ 
{\rm (respectively,}\, \leq\, 0{\rm )}$$ in (\ref{scondform}) and
for every integer $c$ between $C_1$ (as in the proof of Proposition
\ref{finitesetofineq}) and $C_2$, we associate a functional
$$\ell_c\,:\, \ca_{_{T_\gfr}} \,\longrightarrow\, \mathbb{R}$$ as follows: for any
$\eta \,=\, (\alpha_1,\, \cdots,\, \alpha_r)\,\in\, \ca_{_{T_\gfr}}$,
$$\ell_c(\eta) \,=\, \frac{c}{{\rm rank}(W)} + \frac{\sum_{k \leq r'} n_{i_k}
\alpha_{i_k}}{{\rm
rank}(W)}.$$
Define $f\,:\,\ca_{_T} \,\longrightarrow\, \mathbb R$ by $f(\theta)
\,= \,\ell_c(\theta(\mathfrak g))$. These are finitely many in number. We denote the set of these functionals by $S_T^\cE$ (or $S_T$ for notational
convenience). Further they are defined over rationals, since clearly the definition of $f$ only involves rational numbers and the map $\ca_{_T} \,\lra\, \ca_{_{T_\gfr}}$ is defined over rationals and is linear.
For any functional in $f$ in $S_T$, define the $f$--wall in $\cA_{_T}$ as $$W_f\,
:=\, \{x \,\in\, \cA_{_T}\,\mid\, f(x)\,=\,0 \}\, .$$ The collection
$\{W_f\}_{f \in S_T^\cE}$ will be called the \textit{walls} of $\cE$.

\begin{definition}
Fix a quasi--parahoric torsor $\cE$. For any $\theta\,\in\, \ca_{_T}$, let $$S_1^\theta
\,=\, \{ f \,\in\, S_T^\cE\,\mid\, x \,\in\, W_f \}\, .$$ Let
$\mathcal{H}_n\,= \, \{ x \,\in\, \cA_{_T}\,\mid\, \quad |S_1^\theta| \,=\,n \}$.
Define a {\it facet} of $\cE$ to be a connected component of $\mathcal{H}_n$ for some
$n \,\geq\, 0$.
\end{definition}
Thus the facets of $\cE$ provide a decomposition 
$$\cA_{_T}\,=\, \bigsqcup_n \mathcal{H}_n$$
of $\cA_{_T}$. Note that only finitely many $n$ appear here.

For any weight $\theta \,\in\, \cA_{_T}$ there is a unique $n \,\geq\, 0$ such that
$\theta \,\in\, \mathcal{H}_n$. Thus for any $\theta \,\in\, \cA_{_T}$, the
following three are equivalent:
\begin{enumerate}
\item $\theta$ does not belong to any $\cE$--wall,

\item $S_1^\theta$ is empty, and

\item $\theta \,\in \,\mathcal{H}_0$.
\end{enumerate}

The \textit{facet} of $\theta$ is the unique facet of $\cE$ containing it.

The following propositions generalize \cite[Section 2, page 217]{MS}. We note that the 
quasi--parabolic bundles $\cE(\gfr)$ cannot admit any subbundle of degree greater than 
$C_2$ (cf. (\ref{C2})).

\begin{proposition} \label{Cor 2.9}
Let $\theta$ lie in $\cA_{_T}\,=\,Y(T) \otimes \RR$. Then there exists an element
$\theta'$ in the rational apartment $\ca_\QQ \,= \,Y(T) \otimes \QQ
$ such that for all $\ce \,\in \,\eV$, the pair
$(\ce,\,\theta)$ is semistable (respectively, stable) if and only
if $(\ce,\, \theta')$ is semistable (respectively, stable).
\end{proposition}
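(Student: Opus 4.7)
The plan is to exploit the finiteness of the relevant walls in $\cA_{_T}$ afforded by Proposition \ref{finitesetofineq} together with the preceding discussion, and to combine this with the density of rational points in any rational polytope. The only subtlety, compared with the usual Mehta--Seshadri argument for a fixed bundle, is obtaining such a finite set of walls \emph{uniformly} in $\cE \in \eV$.

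First I would dispose of the torsors whose adjoint bundle lies outside $\eV_\gfr(C_2)$. If $\cE(\gfr)\notin\eV_\gfr(C_2)$, then by the very definition of $C_2$ the underlying vector bundle admits a subbundle $W$ of degree $>C_2$ that forces $\chi(\cE(\gfr),W,\alpha)\ge 0$ for every choice of weights $\alpha$; hence $(\cE,\eta)$ is not semistable for \emph{any} $\eta\in\cA_{_T}$. The claimed equivalence thus holds vacuously (both sides are false) for such $\cE$, irrespective of $\theta'$. It therefore suffices to treat $\cE$ with $\cE(\gfr)\in\eV_\gfr(C_2)$. For each such $\cE$, every inequality of the form \eqref{scondform} that is potentially destabilizing is indexed by an integer degree $c\in[C_1,C_2]$, an integer rank $r_W\in[1,r-1]$, and integer multiplicities $n_{i_k}$ bounded above by the fixed $m_{i_k}$ of the common quasi--parabolic type in $\eV_\gfr$. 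Since all these integer parameters range over finite sets, the union $S_T:=\bigcup_{\cE}S_T^{\cE}$ is a \emph{finite} collection of rational affine functionals on $\cA_{_T}$.

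Next I would construct $\theta'$. Let $F_\theta\subset\cA_{_T}$ denote the facet with respect to $S_T$ containing $\theta$, that is, the locus defined by $f=0$ for those $f\in S_T$ with $f(\theta)=0$ and by $\mathrm{sign}(f)=\mathrm{sign}(f(\theta))$ for the remaining $f\in S_T$. Because each $f\in S_T$ is defined over $\QQ$, the affine span of $F_\theta$ is a rational affine subspace, $F_\theta$ is relatively open in this span, and $\ca_\QQ\cap F_\theta$ is dense in $F_\theta$. Pick any $\theta'\in F_\theta\cap\ca_\QQ$.

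To conclude, fix any $\cE$ with $\cE(\gfr)\in\eV_\gfr(C_2)$ and any $f\in S_T^{\cE}\subset S_T$. By construction $\mathrm{sign}(f(\theta))=\mathrm{sign}(f(\theta'))$ and $f(\theta)=0\iff f(\theta')=0$. Since the (semi)stability of $(\cE,\cdot)$ at a given weight is read off precisely from the signs of the finitely many functionals in $S_T^{\cE}$, via Definition \ref{newss} combined with the reduction to finitely many inequalities on $\eV_\gfr(C_2)$, we deduce that $(\cE,\theta)$ is semistable (respectively, stable) if and only if $(\cE,\theta')$ is. The main obstacle is the uniformity of $S_T$ across all $\cE\in\eV$; this is exactly what the boundedness of the integer parameters delivers, once the quasi--parabolic type at $P$ has been fixed.
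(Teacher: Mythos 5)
Your argument is correct and is essentially the paper's own proof: both rest on the finiteness and rationality of the wall functionals coming from Proposition \ref{finitesetofineq}, and both choose a rational weight $\theta'$ in the facet of $\theta$ (the paper via a small ball of radius the distance to the nearest wall not through $\theta$, you via density of rational points in a facet cut out by rational affine equalities and strict inequalities), so that every functional vanishes at $\theta'$ exactly when it vanishes at $\theta$ and otherwise keeps its sign, which forces the $\theta$-- and $\theta'$--(semi)stability conditions to coincide. Your two additions — making the uniformity over $\ce\in\eV$ explicit by taking the finite union $\bigcup_{\ce}S_T^{\ce}$, and disposing vacuously of torsors with $\cE(\gfr)\notin\eV_\gfr(C_2)$ — go slightly beyond the paper, which simply works under the standing restriction \eqref{C2}; just note that your vacuous step tacitly equates non-semistability of the parabolic adjoint bundle (for every weight) with non-semistability of the torsor in the sense of Definition \ref{newss}, an implication that deserves a word of justification since Definition \ref{newss} only tests subbundles of the form ${\rm Lie}(\eP)_{_*}$.
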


\begin{proof} 
Let $S_2^\theta$ be the complement of $S_1^\theta$ in $S_T$.

We note that $\theta$ has a strictly positive distance from each $W_f$, where $f
\,\in\, S_2^\theta$. Let $d$ be the minimum distance if $S_2^\theta$ is nonempty
and set $d$ to be $\infty$ if $S_2^\theta$ is empty. Thus in all cases $d \,>\, 0$. Let
$U$ be the ball in the alcove of radius $d$ around $\theta$. 

Let $I$ denote the $\cE$--facet of $\theta$. Let $I_1$ be connected component of 
$I \cap U$ containing $\theta$, where $U$ is the above ball. Now if $I_1$ is not
reduced to a single point, then we 
can take a rational weight $\theta'$ in it. If $I_1$ is just a point, then $\theta$ 
must be rational because $d\,>\,0$ and all the functionals are defined over 
$\mathbb{Q}$. In this case, we take $\theta'$ to be $\theta$ itself.

Let us check that the (semi)stability conditions for $\theta$ and $\theta'$ coincide. For each functional $J$ in $S_T$, 
\begin{enumerate}
\item if $J \,\in\, S_1^\theta$ then $J(\theta')\,=\,J(\theta)$, because
$\theta' \,\in\, I_1 \,\subset\, I$, and
\item[2.] if $J \,\in \,S_2$ then ${\rm sign}(J(\theta')) \,=\,
{\rm sign}(J(\theta))$, because $\theta' \in I_1 \subset U$.
\end{enumerate}
So for $\cE$, one has $\theta'$--(semi)stability is equivalent
to $\theta$--(semi)stability.
\end{proof}

We return to the setting of $ m$--marked points on $X$ noting that the above 
discussion immediately goes through for multiple marked points.

\begin{lemma}\label{fin-rep-stab}
Let $\boldsymbol\theta \,\in\, \ca_{_T}^{^m}$. Let $\rho_i\,:\, G \,\lra\,
{\rm GL}(V_i)$ for $i \,\leq\, m$ be finitely many representations. Then there exists
$\boldsymbol\theta' \,\in\, \ca_{\bq}^{^m}$ such that for any 
$\ce \,\in \,\eV$, and any $i \,\leq \,m$, the parahoric torsor
$(\ce,\,\boldsymbol\theta(V_i))$ is stable (respectively, semistable) if and only if
$(\ce,\, \boldsymbol\theta'(V_i))$ is stable (respectively, semistable).
\end{lemma}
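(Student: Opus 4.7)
The plan is to mimic the proof of Proposition \ref{Cor 2.9}, but using an enlarged set of walls that simultaneously encodes the (semi)stability data for all representations $\rho_i$ and all torsors $\cE \in \eV$.

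First I would unpack the combinatorial setup. For each representation $\rho_i \colon G \lra {\rm GL}(V_i)$, Section \ref{facets} supplies a rational linear map $\ca_{_T} \lra \ca_{_{T_{V_i}}}$, and Section \ref{cons-ext-stgp} produces the associated parabolic vector bundle $\cE_{V_i,\boldsymbol\theta(V_i)}$. Applying the construction that precedes Proposition \ref{Cor 2.9} to each $\cE_{V_i,\boldsymbol\theta(V_i)}$ at each of the $m$ parabolic points yields, for every $\cE \in \eV$ and every $i$, a finite collection $S_{T,\rho_i}^{\cE}$ of rational affine functionals on $\ca_{_T}^{^m}$; these are the $\cE$-walls for the representation $\rho_i$.

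The crucial step is a uniform finiteness assertion: the union
\[
S \,:=\, \bigcup_{\cE \in \eV}\,\bigcup_{i \leq m} S_{T,\rho_i}^{\cE}
\]
is a \emph{finite} collection of rational walls in $\ca_{_T}^{^m}$. To see this, argue exactly as in Proposition \ref{finitesetofineq}: for each $i$ only subbundles $W \subset \cE(V_i)$ with degree in the band $[C_1,C_2]$ can contribute to any (semi)stability inequality, their ranks are bounded by $\dim V_i$, the quasi--parabolic structure on $\cE(V_i)$ is determined by the fixed parahoric group scheme $\cG_{_{\boldsymbol\theta,X}}$ together with $\rho_i$, and the multiplicities of the induced parabolic structure on $W$ are in turn bounded by the fixed multiplicities on $\cE(V_i)$. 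Hence the discrete data indexing the walls ranges over a finite set as $\cE$ varies over $\eV$ and $i$ over $\{1,\ldots,m\}$; torsors outside the bounded family $\eV_\gfr(C_2)$ are never (semi)stable for any weight, so they can be ignored for the biconditional.

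Having assembled the finite rational set $S$, I would now repeat the argument of Proposition \ref{Cor 2.9} verbatim. Partition $S = S_1^{\boldsymbol\theta} \sqcup S_2^{\boldsymbol\theta}$ according to whether a functional vanishes at $\boldsymbol\theta$; let $d > 0$ be the minimum distance from $\boldsymbol\theta$ to the walls of $S_2^{\boldsymbol\theta}$ (or $\infty$ if empty); let $I$ be the connected component containing $\boldsymbol\theta$ of the intersection of the walls in $S_1^{\boldsymbol\theta}$, and set $I_1 := I \cap B(\boldsymbol\theta, d)$. If $I_1$ is not a single point, choose any rational $\boldsymbol\theta' \in I_1$; if it is a single point, then $\boldsymbol\theta$ must itself be rational (since it is cut out, locally, by finitely many rational equations) and we put $\boldsymbol\theta' := \boldsymbol\theta$. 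By construction, every $f \in S_1^{\boldsymbol\theta}$ satisfies $f(\boldsymbol\theta') = f(\boldsymbol\theta) = 0$ and every $f \in S_2^{\boldsymbol\theta}$ satisfies ${\rm sign}(f(\boldsymbol\theta')) = {\rm sign}(f(\boldsymbol\theta))$; applied to the walls $S_{T,\rho_i}^{\cE} \subset S$ for each individual pair $(\cE, i)$, this says exactly that the (semi)stability criteria for $(\cE, \boldsymbol\theta(V_i))$ and $(\cE, \boldsymbol\theta'(V_i))$ coincide.

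I expect the main obstacle to be the uniform finiteness of $S$ across all $\cE \in \eV$: one must verify that enlarging the quantification from a single $\cE$ to the whole family $\eV$, and from one representation to the finite list $\rho_1, \ldots, \rho_m$, does not destroy the finiteness that drove Proposition \ref{Cor 2.9}. Once this is in hand, the rest is a direct transcription of that earlier argument.
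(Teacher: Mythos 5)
Your argument is correct and is essentially the paper's own route: the paper proves Lemma \ref{fin-rep-stab} by declaring it immediate from Proposition \ref{Cor 2.9}, relying on the same ingredients you spell out, namely that the walls coming from each $\rho_i$ form a finite set of rational functionals uniform over the family (via the $[C_1,C_2]$ degree band and the fixed quasi--parabolic data, with non-semistable torsors outside $\eV_\gfr(C_2)$ discarded), after which the facet/ball argument of Proposition \ref{Cor 2.9} is repeated for the union of these finitely many wall systems. Your write-up merely makes explicit the uniform finiteness and the simultaneous choice of $\boldsymbol\theta'$ that the paper leaves implicit, so there is nothing to correct.
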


\begin{proof}
This is immediate from Proposition \ref{Cor 2.9}.
\end{proof}

The following proposition is a generalization of \cite[Proposition 2.1]{MS}.

\begin{proposition}\label{Prop 2.1}
Given any $\boldsymbol\theta_{_0} \,\in\, \cA_{_T}^{^m}$, there exists a neighborhood
$U$ of $\boldsymbol\theta_{_0}$ in $\cA_{_T}^{^m}$ with the property
that for all $\ce \,\in\, \eV$ such that $(\ce,\,\boldsymbol\theta_{_0})$ is stable,
the pair $(\ce,\, \boldsymbol\theta)$ is stable for all $\boldsymbol\theta \,\in\, U$.
\end{proposition}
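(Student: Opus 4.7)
The plan is to generalize the openness argument of \cite[Proposition~2.1]{MS} to the parahoric setting, using Proposition~\ref{finitesetofineq} to reduce the question to the behavior of finitely many continuous tests. The central observation is that, although $\eV$ is large, the affine functionals whose signs control stability can be drawn from a single finite pool $\mathcal{S}$ of rational affine functionals on $\cA_{_T}^{^m}$; once $\mathcal{S}$ is in hand, openness follows from joint continuity and finiteness.

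First I will reduce to torsors whose adjoint bundles lie in $\eV_\gfr(C_2)$. The constant $C_2$ can be chosen uniformly on any bounded neighborhood of $\boldsymbol\theta_{_0}$ in $\cA_{_T}^{^m}$; for such a choice, if $\cE(\gfr)$ admits a subbundle of degree strictly greater than $C_2$ then $(\cE,\,\boldsymbol\theta)$ fails to be semistable for \emph{every} $\boldsymbol\theta$ in that neighborhood, by the discussion following \eqref{C2}. In particular such $\cE$ cannot be $\boldsymbol\theta_{_0}$-stable, so the hypothesis of the proposition is vacuous for them, and we may assume $\cE(\gfr)\in\eV_\gfr(C_2)$.

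Under this assumption, Proposition~\ref{finitesetofineq} and the discussion preceding it produce a finite collection $\mathcal{S}$ of rational affine functionals on $\cA_{_T}^{^m}$ with the following property: for every $\cE\in\eV$ with $\cE(\gfr)\in\eV_\gfr(C_2)$, the stability of $(\cE,\,\boldsymbol\theta)$ in the sense of Definition~\ref{newss} is equivalent to $f(\boldsymbol\theta)<0$ for every $f$ in a subset $\mathcal{S}(\cE)\subset\mathcal{S}$ determined by the generic parabolic reductions of $\cE(\mathcal{G})$. Crucially, $\mathcal{S}(\cE)$ does not depend on $\boldsymbol\theta$, since subbundles of $\cE(\gfr)$ are intrinsic to $\cE$. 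Set
\[
U\,:=\,\{\boldsymbol\theta\in\cA_{_T}^{^m}\mid f(\boldsymbol\theta)<0 \text{ for every } f\in\mathcal{S} \text{ with } f(\boldsymbol\theta_{_0})<0\}.
\]
This is an open neighborhood of $\boldsymbol\theta_{_0}$ because $\mathcal{S}$ is finite and each $f$ is continuous. For any $\cE$ with $(\cE,\,\boldsymbol\theta_{_0})$ stable, strict stability forces $f(\boldsymbol\theta_{_0})<0$ for every $f\in\mathcal{S}(\cE)$, so $\mathcal{S}(\cE)$ sits inside the indexing set of $U$; hence $f(\boldsymbol\theta)<0$ for all $f\in\mathcal{S}(\cE)$ and all $\boldsymbol\theta\in U$, which is exactly the stability of $(\cE,\,\boldsymbol\theta)$.

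The step I expect to be the main obstacle is establishing the uniformity of $\mathcal{S}$ across varying $\cE$: one must check that the bounds $C_1$ and $C_2$ on admissible subbundle degrees, together with the admissible multiplicity vectors $(n_{i_1},\,\ldots,\,n_{i_k})$ appearing in \eqref{scondform}, depend only on the group scheme $\mathcal{G}_{_{\boldsymbol\theta,X}}$ and on a bounded weight region, not on the particular torsor. This is implicit in Proposition~\ref{finitesetofineq}, but one must additionally verify that restricting to Lie algebras of parabolic subgroup schemes $\eP\subset\cE(\mathcal{G})$ never enlarges the set of relevant multiplicity data beyond what is enumerated in the parabolic-vector-bundle setup. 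Once these uniformity checks are complete, the continuity-and-finiteness argument above delivers the desired $U$.
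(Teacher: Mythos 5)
Your argument is essentially the paper's own proof: both rest on Proposition \ref{finitesetofineq} to reduce stability to finitely many rational affine functionals drawn from a pool independent of the particular torsor, observe that stability at $\boldsymbol\theta_{_0}$ makes the relevant functionals strictly negative there, and then use finiteness plus continuity to produce a neighborhood (the paper takes a ball of radius the minimal distance to the walls, you take the finite intersection of half-spaces) on which those signs persist. Your write-up is in fact a bit more explicit than the paper's terse proof about the uniformity of the functional pool over all $\ce\,\in\,\eV$ and about discarding torsors with $\cE(\gfr)\,\notin\,\eV_\gfr(C_2)$, which the paper handles only implicitly.
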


\begin{proof}
Now $\theta$ may as well be rational. Owing to the stability condition, $f(\theta)
\,<\, 0$ for all $f \,\in\, S_T$. Thus we have $S_2^\theta\,=\,S_T$. Let $d$ be the
minimum distance between 
$\theta$ and any $f$--wall. Now we take $U$ to be the ball around $\theta$ radius $d$.
\end{proof}

\begin{definition}\label{polystable-pvb} 
A parahoric torsor $(\ce,\,\boldsymbol\theta)$ for the linear parahoric group scheme 
$\mathcal{GL}(V)$ is called {\em polystable} if the associated parabolic vector 
bundle $\ce(V)_{_*}$ is polystable (i.e., a direct sum of stable parabolic bundles 
of parabolic degree $0$).
\end{definition}

It is straight-forward to check that Lemma \ref{fin-rep-stab} remains valid
if stability in the lemma is substituted by polystability.

\begin{corollary}\label{stabletopolystable} Let $\rho\,:\,G \,\longrightarrow\,
{\rm GL}(V)$ be a representation and $\rho_{_X}\,:
\,{\mathcal G} \,\longrightarrow\, \mathcal{GL}(V)$ the induced
homomorphism of parahoric group schemes as in 
\eqref{assocparabstructure}. Let $\ce$ be a $\cg$--torsor. Then, for a weight 
$\boldsymbol\theta \,\in\, \ca_{_T}^{^m}$ such that 
$(\ce,\, \boldsymbol\theta)$ is stable, the pair $(\ce(V)_{_*},\,
\boldsymbol\theta(V))$ is polystable.
\end{corollary}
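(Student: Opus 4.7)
The plan is to reduce the corollary to the case of rational weights, where the $(\Gamma, G)$-bundle correspondence from Section \ref{mtbs} combined with a $\Gamma$-equivariant version of Ramanathan's theorem yields the conclusion.

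First I would apply Lemma \ref{fin-rep-stab} to the two representations $\mathrm{id}\,:\,G\,\lra\, G$ and $\rho\,:\,G\,\lra\,{\rm GL}(V)$, using the stability version of the lemma for the identity and the polystability version (valid by the remark immediately following Lemma \ref{fin-rep-stab}) for $\rho$. This produces a rational weight $\boldsymbol\theta'\,\in\,\ca_{_T}^{^m}\cap (Y(T)\otimes\QQ)^m$ close to $\boldsymbol\theta$ such that $(\cE,\,\boldsymbol\theta)$ is stable if and only if $(\cE,\,\boldsymbol\theta')$ is stable, and $(\cE(V)_{_*},\,\boldsymbol\theta(V))$ is polystable if and only if $(\cE(V)_{_*},\,\boldsymbol\theta'(V))$ is polystable. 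Hence one may assume from the outset that $\boldsymbol\theta$ is rational.

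For rational $\boldsymbol\theta$, Section \ref{mtbs} furnishes a ramified Galois cover $p\,:\,Y\,\lra\, X$ with Galois group $\Gamma$ and a $(\Gamma,G)$-bundle $E$ on $Y$ from which $\cE$ is obtained via the invariant direct image isomorphism \eqref{sbs}. As established in \cite{bs}, the stability of $(\cE,\,\boldsymbol\theta)$ as parahoric $\cG$-torsor is equivalent to the $\Gamma$-stability of $E$ as a principal $G$-bundle on $Y$. I would then invoke the $\Gamma$-equivariant analogue of Ramanathan's theorem: if $E$ is a $\Gamma$-stable principal $G$-bundle on $Y$ and $\rho\,:\,G\,\lra\,{\rm GL}(V)$ is any rational representation, then $E(V)\,:=\,E\times_G V$ is $\Gamma$-polystable as a vector bundle on $Y$. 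Applying the isomorphism \eqref{sbs} with $G$ replaced by ${\rm GL}(V)$, the $(\Gamma,{\rm GL}(V))$-bundle $E(V)$ descends to the parabolic vector bundle $(\cE(V)_{_*},\,\boldsymbol\theta(V))$, and a $\Gamma$-stable direct summand of $E(V)$ descends under invariant direct image to a stable parabolic subbundle of $\cE(V)_{_*}$ of parabolic degree zero. This produces a polystable decomposition of $\cE(V)_{_*}$ in the sense of Definition \ref{polystable-pvb}, completing the proof.

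The main obstacle is the $\Gamma$-equivariant Ramanathan theorem cited above. Ramanathan's original argument reduces the study of reductions of structure group of $E(V)$ to parabolic subgroups of ${\rm GL}(V)$ to reductions of $E$ to parabolic subgroups of $G$, and then analyses the resulting sub-line-bundles via Harder--Narasimhan considerations; each step of this argument respects $\Gamma$-equivariance, and so transports to the equivariant setting essentially verbatim, the only additional bookkeeping being that one restricts throughout to $\Gamma$-invariant reductions. No new analytic input beyond the classical Ramanathan proof is needed.
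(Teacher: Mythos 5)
Your proposal is correct and follows essentially the same route as the paper: reduce to rational weights using the variation-of-weights results of Section \ref{variation-ss}, then pass through the $(\Gamma,G)$-bundle equivalence of Section \ref{mtbs} and use that associated bundles of a stable equivariant principal $G$--bundle are polystable. If anything, your reduction step is slightly more careful than the paper's (which cites only Proposition \ref{Prop 2.1}), since by invoking Lemma \ref{fin-rep-stab} for both the identity and $\rho$, with the polystability variant, you ensure the single rational weight $\boldsymbol\theta'$ simultaneously preserves stability of $(\cE,\boldsymbol\theta)$ and polystability of $(\cE(V)_{_*},\boldsymbol\theta(V))$.
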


\begin{proof} 
By Proposition (\ref{Prop 2.1}), we can assume $\theta$ is rational. For a stable 
equivariant principal $G$--bundle, the associated bundles are polystable. Consequently, 
in view of the equivalence of categories in Section \ref{mtbs}, The stability of $(\ce, 
\boldsymbol \theta)$ implies that $(\ce(V),\, \boldsymbol \theta(V)$ is polystable.
\end{proof}

\section{Connections on parahoric $\cG$--torsors}\label{con-defn}

The main aim of this section is to define connections on a parahoric $
\cg_{_{\boldsymbol\theta,X}}$--torsor.
 
\subsection{$D_X$--modules} We briefly recall the definition of $D_X$--modules.

\begin{definition}\label{dxmodule} Let $X \,\lra\, S$ be a $S$--scheme. Let $dx$ denote the image of $x$ under the canonical de Rham differentiation map $d\,:\,\cO_X
\,\longrightarrow\, \Omega^1_{X/S}$. Let $\cF$ be a coherent
sheaf of $\cO_X$--modules over $X$. By a $D_X$--\textit{module structure} on $\cF$ we mean a $\cO_S$--linear homomorphism of sheaf of 
abelian 
groups $\nabla\,:\, \cF \,\lra\, \cF \otimes_{\cO_X} \Omega^1_{X/S}$ satisfying
Leibniz rule which says that
\begin{equation}\label{Leib}
\nabla(x f)\,=\, 
f \otimes dx + x \nabla(f)\, ,
\end{equation}
\end{definition}
where $f$ and $x$ are local sections of $\cF$ and $\cO_X$ respectively.

\begin{definition} Let $\nabla_\cF\,:\, \cF\,\lra\, \cF \otimes \Omega^1_{X/S}$ and
$\nabla_\cE\,:\, \cE\,\lra\, \cE \otimes \Omega^1_{X/S}$ be two connections over
$\cF$ and $\cE$ respectively. Define their tensor product $\nabla_\cF \otimes
\nabla_\cE\,:\, \cF \otimes \cE \,\lra\, \cF \otimes \cE \otimes \Omega^1_{X/S}$
to be 
\begin{equation}\label{tensorp}
\nabla_{\cF \otimes \cE} (f \otimes e)\,=\, \nabla_\cF(f) \otimes e + 
f \otimes \nabla_\cE(e)\, , 
\end{equation}
where $f$ and $e$ are local sections of $\cF$ and $\cE$ respectively. 

Similarly define $\nabla_\Hom: Hom(\cE,\cF) \,\lra \,Hom(\cE,\cF) \otimes
\Omega^1_{X/S}$ to be 
\begin{equation}\label{homcon} \nabla_{\Hom(\cE,\cF)}(\Phi)(e)\,=\,
\nabla_\cF(\Phi(e)) - \Phi(\nabla_\cE(e))\, , \end{equation}
where $\Phi$ and $e$ are local sections of $Hom(\cE,\cF)$ and $\cE$
respectively.
\end{definition}

\subsection{Logarithmic connections on curves}

The canonical line bundle of the smooth complex projective curve $X$ will be denoted
by $K_X$. Fix a finite subset $D\,=\, \{x_i\}_{1 \leq i \leq m}\, \subset\, X$; define
$$K_X(\log D)\,=\,K_X\otimes {\mathcal O}_X(D)\, .$$
A logarithmic connection on a vector bundle $V\,\longrightarrow\, X$ singular on $D$
is a first order algebraic differential operator $\nabla\,:\, V \,\lra\,
V \otimes K_X(\log D)$ satisfying the Leibniz rule.

For a point $x\,\in\, D$, the fiber $K_X(\log D)_x$ is identified with $\CC$ using the
Poincar\'e adjunction formula. For a logarithmic connection $(V,\, \nabla)$, the composition
$$ V \,\stackrel{\nabla}{\lra}\, V \otimes K_X(\log D)\, \lra \,(V \otimes K_X(\log D))_{x}
\,\lra\, V_x\, , $$
which is a $\mathbb C$--linear endomorphism of $V_x$,
is called the {\it residue of $\nabla$ at $x$} \cite[page 53]{del}, and it
is denoted by ${\rm Res}(\nabla,x)$. The monodromy of $\nabla$ around $x$ is conjugate to
\begin{equation}\label{mr}
\exp(-2\pi\sqrt{-1}{\rm Res}(\nabla,x))
\end{equation}
\cite[p.~53, Th\'eor\`eme~1.17]{del}.

\subsection{Connection on parabolic vector bundles}

Let $V\,\lra\, X$ be a vector bundle on $X$. A {\it quasi--parabolic} structure on $V$
over $D$ is a filtration, for each $x \,\in \, D$, of subspaces
\begin{equation}\label{flag}
V_x \,=\, F^x_1 \,\supsetneq \, F^x_2 \,\supsetneq\, \cdots \,\supsetneq \,F^x_{a_x}
\,\supsetneq\, F^x_{a_x +1} \,=\, \{0\}\, .
\end{equation}
A {\it parabolic structure} on $V$ over $D$ is a quasi--parabolic structure as above
together with real numbers 
$$
0 \,\leq\, \alpha^x_1 \,< \,\cdots \,<\, \alpha^x_i \,<\,\cdots\, < \alpha^x_{a_x} \, <\,1
$$
associated to the quasi--parabolic flags.
We shall often abbreviate a parabolic vector bundle
$(V,\,\{F^x_* ,\, \alpha^x_*\}_{x \in D})$ by $V_*$. 

\begin{definition}\label{con-on-parvb} A connection on $V_*$ is a logarithmic connection
$\nabla$ on $V$ such that for all $x \,\in\, D$,
\begin{itemize}
\item the residue ${\rm Res}(\nabla,x)$ is semisimple and preserves the quasi--parabolic
flag at $x$, meaning ${\rm Res}(\nabla,x)(F^x_i) \,\subseteq\, F^x_i$ for all $i$, and 

\item ${\rm Res}(\nabla,x)(F^x_i/F^x_{i+1}) \,= \, \alpha^x_i \Id_{F^x_i/F^x_{i+1}}$.
\end{itemize}
\end{definition}

A connection on $V_*$ induces a connection on the dual parabolic vector bundle $V^*_*$. 
To see this, given a logarithmic connection $\nabla$ on $V$ defining a connection on 
$V_*$, consider the logarithmic connection on $V^*\otimes {\mathcal O}_X(D)$ induced by 
$\nabla$. This logarithmic connection preserves the subsheaf of $V^*\otimes {\mathcal 
O}_X(D)$ identified with the vector bundle underlying the parabolic vector bundle 
$V^*_*$. The logarithmic connection on this subsheaf obtained this way defines a 
connection on $V^*_*$.

Let $V^1_*$ and $V^2_*$ be parabolic vector bundles with underlying vector bundles $V^1$ 
and $V^2$ respectively. Let $\nabla^1$ and $\nabla^2$ be connections on $V^1_*$ and 
$V^2_*$ respectively. Consider the logarithmic connection on $V^1\otimes V^2\otimes 
{\mathcal O}_X(D)$ induced by $\nabla^1$ and $\nabla^2$. It preserves the subsheaf of 
$V^1\otimes V^2\otimes {\mathcal O}_X(D)$ corresponding to the parabolic tensor product 
$V^1_*\otimes^p V^2_*$. The logarithmic connection on this subsheaf obtained this way 
defines a connection on $V^1_*\otimes^p V^2_*$.

\subsection{Lie connection on a principal $G$--bundle}

For a principal $G$--bundle $E
\,\lra\, X$, let $E(\gfr)\,=\, E_G\times^G \gfr$ be its adjoint bundle. The fibers of
$E(\gfr)$ are equipped with a Lie bracket structure $[.,.]\,:\,
 E(\gfr) \otimes E(\gfr)\,\lra\, E(\gfr)$ induced by the Lie algebra structure of
$\gfr$.

\begin{definition}\label{Lieconprin}
A {\it Lie connection} on $E$ is a connection $$\nabla\,:\, E(\gfr)\,\lra\, E(\gfr) \otimes
\Omega^1_X$$ such that following diagram is commutative
\begin{equation}\label{Lieconprinseq}
\xymatrix{
E(\gfr) \otimes E(\gfr) \ar[rrr]^{[.,.]} \ar[d]_{\nabla_\otimes} &&& E(\gfr) \ar[d]^{\nabla} \\
E(\gfr) \otimes E(\gfr) \otimes \Omega^1_{X} \ar[rrr]^{~~~~~[.,.] \otimes \Id_{\Omega^1_{X}}} &&& E(\gfr) \otimes \Omega^1_{X}
}
\end{equation}
where $\nabla_\otimes$ is the connection on $E(\gfr) \otimes E(\gfr)$
induced by $\nabla$.
\end{definition}

The above commutativity condition means that the section of $E(\gfr)\otimes (E(\gfr) 
\otimes E(\gfr))^*$ given by the Lie bracket operation on $E(\gfr)$ is flat with respect 
to the connection on $E(\gfr)\otimes (E(\gfr) \otimes E(\gfr))^*$ induced by $\nabla$.

A connection on $E_G$ produces a Lie connection on $E_G$. Therefore, we get a map from 
the space of all connections on $E_G$ to the space of all Lie connection on $E_G$. Since 
$G$ is semisimple, the adjoint homomorphism $G\, \longrightarrow\, \text{GL}(\gfr)$ has 
finite kernel and its image is the connected component, containing the identity element, 
of the group of all automorphisms of the Lie algebra $\gfr$. From this it follows that 
the above map from the space of all connections on $E_G$ to the space of all Lie 
connection on $E_G$ is a bijection. This fact motivates the definition of a connection on 
a parahoric torsor.

\subsection{Connection on parahoric $\cG$--torsors}\label{con-on-tor}

We refer to Section \ref{lub} for the notation. Take any $\cG\,=\, \cg_{_{\boldsymbol\theta,X}}$.

\begin{definition}\label{Lieconpar}
A {\it connection} on a $\cG$--torsor $\cE$ is a logarithmic parabolic connection (see Definition
\ref{con-on-parvb}) $\nabla$ on $\cE(\gfr)$ satisfying the condition that
the section of $Hom^p(\cE(\gfr) \otimes^p \cE(\gfr), \, \cE(\gfr))$ given by the homomorphism in
\eqref{parhom} is flat with respect to the connection $\nabla_{hom}$
on the parabolic vector bundle $Hom^p(\cE(\gfr) \otimes^p \cE(\gfr), \cE(\gfr))$
induced by $\nabla$.
\end{definition}

\subsection{A Tannakian description of connection}

Let $M$ be a smooth complex variety. Let $G$ be a complex reductive algebraic group and
$E_G\, \longrightarrow\, M$ a principal $G$--bundle. Take any pair $(H,\, f)$, where
$H$ is a complex algebraic group and $f\, :\, G\, \longrightarrow\, H$ an algebraic
homomorphism such that corresponding homomorphism of Lie algebras
$df\, :\, \text{Lie}(G)\, \longrightarrow\, \text{Lie}(H)$ is injective.
Let $E_H\, :=\, E_H\times^f H\, \longrightarrow\, M$ be the principal $H$--bundle obtained
by extending the structure group of $E_G$ using $f$. Let $$\widetilde{f}\, :\,
E_G\, \longrightarrow\, E_H$$ be the natural morphism. A connection on $E_G$ induces a
connection on $E_H$. The converse is also true. To see this, fix a $G$--equivariant
splitting
$$
\sigma\, :\, \text{Lie}(H)\, \longrightarrow\, \text{Lie}(G)\, ,
$$
meaning $\sigma\circ df\,=\, \text{Id}_{\text{Lie}(G)}$ (such a splitting exists because 
$G$ is reductive). If $D$ is a $\text{Lie}(H)$--valued $1$--form on $E_H$ defining a 
connection on $H$, then $\sigma\circ \widetilde{f}^*D$ is a connection on $E_G$. If $D_0$ 
is a connection on $E_G$ and $D$ the connection on $E_H$ induced by $D_0$, then the 
connection $\sigma\circ \widetilde{f}^*D$ on $E_G$ coincides with $D_0$. Indeed, this
follows immediately from the fact that $\sigma\circ df\,=\, \text{Id}_{\text{Lie}(G)}$.

Therefore, the map from connections on $E_G$ to connections on $E_H$ is injective. The 
image of this map from connections on $E_G$ admits a group theoretic description. This
will be explained below.

We remove the assumption that the algebraic group $G$ is reductive. As before $f\, :\, G\, 
\longrightarrow\, H$ to be any algebraic homomorphism such that $df$ is injective.

A theorem of C. Chevalley (see \cite[p. 80]{Hu}) says that there is a finite dimensional 
left $H$--module
$$
\rho\, :\, H\, \longrightarrow\, \text{GL}(W)
$$
and a complex line $\ell\, \subset\, W$ such that $f(G)$ is exactly the isotropy 
subgroup, of the point in the projective space $P(W)$ representing the line $\ell$, for 
the action of $H$ on the projective space $P(W)$ of lines in $W$ induced by the action of 
$H$ on $W$. Let $$E_W\, :=\, E_H\times^H W\, \longrightarrow\,M$$ be the vector bundle 
associated to $E_H$ for the $H$--module $W$. For a connection $D$ on $E_H$, the 
connection on $E_W$ induced by $D$ will be denoted by $D_W$. Note that $E_W$ is 
identified with the vector bundle associated to $E_G$ for the action $\rho\circ f$ of $G$ 
on $W$. The condition on $\ell$ implies that the action of $G$ on $W$ preserves
it. Let $E_\ell\, \subset\, E_W$ be the line subbundle associated to the 
$G$--submodule $\ell\, \subset\, W$.

A connection $D$ on $E_H$ is induced by a connection on $E_G$ if and only if the 
corresponding connection $D_W$ on $E_W$ preserves the above line subbundle $E_\ell\, 
\subset\, E_W$. This characterizes the connections on $E_H$ that are induced by 
connections on $E_G$.

We recall that the Tannakian theory involves describing properties of principal
bundles in terms of properties of associated vector bundles. For a
Tannakian description of connections on $E_G$, take $H\,=\, \text{GL}(V)$, so
$V$ is a finite dimensional $G$--module. Let
$$
E_V\, :=\, E_G\times^f V\, \longrightarrow\,M
$$ 
be the vector bundle associated to $E_G$ for the $G$--module $V$. From the above
observation we know that a connection on $E_G$ is a connection $D$ on the vector bundle
$E_V$ such that the connection on the vector bundle $E_W$ induced by $D$ preserves
the line subbundle $E_\ell\, \subset\, E_W$.

\section{Connections on $(\Gamma,G)$--bundles and rational weights} 

Let $F$ be a principal $G$--bundle on a curve $Y$ with adjoint bundle
\begin{equation}\label{pi}
\text{Ad}(F)\,=\,
F(G)\,=\,F \times^G G\, ,
\end{equation}
where $G$ acts on itself by conjugation. Given a principal $G$--bundle $E$ on $Y$, define
the ``twisting'' by $F$
$$
E \times^G F^{op}\,:=\, (E\times_Y F)/\sim\, ,
$$
where the equivalence relation identifies all pairs $(e,\, f)\, ,\, (ez,\, fz)\, \in\,
E\times_Y F$, with $z\,\in\, G$. Consider the map
$$
\xi\, :\, E\times_Y F\times_Y F\times G \, \longrightarrow\, E\times_Y F\, ,\ \
(e,\, f,\, fz,\, z_1)\, \longmapsto\, (ezz_1,\, fz)\, ,
$$
where $(e,\, f)\, \in\, E\times_Y F$ and $z,\, z_1\, \in\, G$. There is a unique map
$$
\widehat{\xi}\, :\, (E \times^G F^{op})\times_Y F(G)\, \longrightarrow\, E \times^G F^{op}
$$
such that the following diagram is commutative
$$
\begin{matrix}
(E\times_Y F)\times_Y (F\times G) & \stackrel{\xi}{\longrightarrow} & E\times_Y F\\
\Big\downarrow && \Big\downarrow\\
(E \times^G F^{op})\times_Y F(G)& \stackrel{\widehat\xi}{\longrightarrow} &E \times^G F^{op}\, ;
\end{matrix}
$$
recall that $E \times^G F^{op}$ and $F(G)$ are quotients of $E\times_Y F$ and $F\times G$
respectively. This map $\widehat{\xi}$ makes $E \times^G F^{op}$ a
$F(G)$--torsor on $Y$. Let $\alpha_F$ be the map from the space of principal $G$--bundles
to the space of $F(G)$--torsors on $Y$ defined by 
$E \,\longmapsto\, E \times^G F^{op}$. This $\alpha_F$ is an equivalence of categories.

Consider the adjoint action of $G$ on ${\rm Lie}(G)\,=\, {\mathfrak g}$.
Let $\text{ad}(F)\,=\, F\times^G{\mathfrak g} \, \longrightarrow\, Y$ be the
associated adjoint vector bundle. We note that $\text{ad}(F)$ is the Lie algebra
bundle associated to the group scheme $F(G)$.

Let $\nabla_0$ be a connection $F$. Using $\nabla_0$ we will define connections on 
a $F(G)$--torsor.

The connection $\nabla_0$ induces a connection on every fiber bundle associated to
$F$. In particular, it produces a connection on $F(G)$; this connection on
$F(G)$ given by $\nabla_0$ will be denoted by $\nabla^G_0$. The kernel of the
differential $d\pi\, :\, TF(G)\, \longrightarrow\, \pi^*TY$ of the map $\pi$ in
\eqref{pi} is identified with $\pi^*\text{ad}(F)$. So the above connection
$\nabla^G_0$ on $F(G)$ gives a homomorphism
\begin{equation}\label{nG}
\nabla^G_0 \, :\, TF(G)\, \longrightarrow\,\pi^*\text{ad}(F)\, .
\end{equation}

Take any $F(G)$--torsor $\varphi\, :\, E\, \longrightarrow\, Y$. Consider the
action $E\times F(G)\, \longrightarrow\, E$ of $F(G)$ on $E$. Let
\begin{equation}\label{de}
\delta\, :\, TE\oplus\varphi^* TF(G) \, \longrightarrow\,TE
\end{equation}
be the differential of this map giving the action.
Consider the differential of $\varphi$
$$
d\varphi\, :\, TE\, \longrightarrow\, \varphi^*TY\, .
$$
Let
$$
T_{\varphi}\, :=\, \text{kernel}(d\varphi) \, \subset\, TE
$$
be the relative tangent bundle for the projection $\varphi$. The action of
$F(G)$ on $E$ identifies $T_{\varphi}$ with $\varphi^*\text{ad}(F)$.

A \textit{connection} on a $F(G)$--torsor $\varphi\, :\,E\, \longrightarrow\, Y$ is a
holomorphic homomorphism of vector bundles over $Y$
$$
\beta\,: \, TE \, \longrightarrow\, \varphi^*{\rm ad}(F)\,=\, {\rm ad}(\varphi^*F)
$$
such that
\begin{enumerate}
\item the restriction of $\beta$ to $T_{\varphi}$ coincides with the above
identification of $T_{\varphi}$ with $\varphi^*\text{ad}(F)$, and

\item for the homomorphism $\delta$ in \eqref{de},
$$
\delta(\text{kernel}(\beta)\oplus \varphi^*\text{kernel}(\nabla^G_0))\, \subset\,
\text{kernel}(\beta)\, ,
$$
where $\nabla^G_0$ is the homomorphism in \eqref{nG}.
\end{enumerate}

Note that the above definition of a connection on $E$ depends on $\nabla_0$.

If $F$ is the trivial principal $G$--bundle $Y\times G$, then $F(G) \,=\, Y\times 
G$, and a $F(G)$--torsor is in fact a principal $G$--bundle on $Y$. If we choose 
$\nabla_0$ to be the trivial connection on $Y\times G$, then connections on a 
$F(G)$--torsor are same as connections on the corresponding principal $G$--bundle.

The following lemma is straight-forward to check.

\begin{lemma}\label{lem-c}
Given a $F(G)$--torsor $\varphi\, :\,E\, \longrightarrow\, Y$, a homomorphism 
$$
\beta\,: \, TE \, \longrightarrow\, \varphi^*{\rm ad}(F)\,=\, {\rm ad}(\varphi^*F)
$$
defines a connection on $E$ if and only if
$$
\delta^*\beta\,=\, \beta\oplus \nabla^G_0
$$
on $TE\oplus\varphi^* TF(G)$, where $\delta$ is constructed in \eqref{de} and
$\nabla^G_0$ is the homomorphism in \eqref{nG}.
\end{lemma}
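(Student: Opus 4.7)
The plan is to show that the equation $\delta^*\beta = \beta \oplus \nabla^G_0$ is just a repackaging of the two defining conditions of a connection on an $F(G)$--torsor into a single identity. Since $\delta$, $\beta$, and $\nabla^G_0$ are linear bundle homomorphisms and $\delta$ is additive in its two arguments, checking the proposed equation reduces, upon restriction to the two summands of $TE \oplus \varphi^* TF(G)$, to verifying
\[
\beta(\delta(v,0)) \,=\, \beta(v) \qquad \text{and} \qquad \beta(\delta(0,w)) \,=\, \nabla^G_0(w).
\]

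First I would observe that the first of these is automatic: $\delta$ is evaluated along the identity section $1\colon Y \to F(G)$ (which is precisely what makes $\varphi^* TF(G)$ a bundle on $E$), so the action $e \mapsto e \cdot 1_{\varphi(e)}$ reduces to the identity and hence $\delta(v,0) = v$. Therefore all content lies in the second equation, and I would analyze it using the splitting $TF(G)|_{1(Y)} = T_\pi|_{1(Y)} \oplus \ker(\nabla^G_0)|_{1(Y)}$ provided by the connection $\nabla^G_0$.

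For a vertical tangent vector $w_v \in T_\pi|_{1_y} \simeq \text{ad}(F)_y$, the image $\delta(0,w_v)$ is the value at $e$ of the fundamental vector field of $w_v$ and lies in $T_\varphi$; under the canonical identification $T_\varphi \simeq \varphi^*\text{ad}(F)$ this tangent vector is just $w_v$, while $\nabla^G_0(w_v) = w_v$ by definition of the connection on $F(G)$. Hence the second equation restricted to vertical $w$ is exactly condition (1), that $\beta|_{T_\varphi}$ is this canonical identification. For a horizontal $w_h \in \ker(\nabla^G_0)$, the right--hand side vanishes and the equation demands $\delta(0,w_h) \in \ker(\beta)$, which is the special case $v = 0$ of condition (2). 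The full condition (2), allowing arbitrary $v \in \ker(\beta)$, then follows by combining this with $\delta(v,0) = v$ and using additivity of $\delta$. Conversely, assuming the identity $\delta^*\beta = \beta \oplus \nabla^G_0$, conditions (1) and (2) are read off directly from precisely these two special cases.

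As the paper itself notes, the lemma is ``straight-forward to check'': there is no serious obstacle, and the argument is purely an unwinding of definitions. The only step that requires a moment of care is keeping track of the canonical identifications in play, namely $T_\varphi \simeq \varphi^*\text{ad}(F)$ coming from the free $F(G)$--action on $E$, the analogous $T_\pi|_{1(Y)} \simeq \text{ad}(F)$ for $F(G)$ acting on itself, and the horizontal/vertical decomposition of $TF(G)$ along the identity section determined by $\nabla^G_0$. Once these are fixed, the equivalence of the two formulations is immediate by direct substitution.
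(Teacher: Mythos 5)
Your proof is correct and is exactly the definitional unwinding the paper intends: the paper gives no written argument for Lemma \ref{lem-c} (it only remarks beforehand that the lemma is straightforward to check), and your restriction of the identity $\delta^*\beta\,=\,\beta\oplus\nabla^G_0$ to the two summands of $TE\oplus\varphi^*TF(G)$, combined with the vertical/horizontal splitting of $TF(G)$ along the identity section determined by $\nabla^G_0$, recovers precisely conditions (1) and (2) in the definition of a connection on an $F(G)$--torsor, with additivity of $\delta$ giving the converse direction. Nothing further is needed.
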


\begin{proposition}\label{twist-equi}
Twisting by $F$ defines an equivalence
between principal $G$--bundles equipped with a connection and
$F(G)$--torsors equipped with a connection. 
\end{proposition}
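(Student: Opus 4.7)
The functor $\alpha_F\,:\,E\,\mapsto\, E\times^G F^{op}$ is already known to be an equivalence between principal $G$--bundles on $Y$ and $F(G)$--torsors on $Y$; the task is to lift this equivalence to the categories in which both sides carry connections (with ``connection on a $F(G)$--torsor'' meaning the data introduced above relative to the fixed $\nabla_0$).

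\textbf{Forward construction.} Given $(E,\nabla_E)$, the connections $\nabla_E$ on $E$ and $\nabla_0$ on $F$ determine $G$--equivariant horizontal subbundles $H_E\subset TE$ and $H_F\subset TF$. On the fiber product $E\times_Y F$ I form the ``diagonal'' horizontal distribution
\[
H\,:=\,\{(v,w)\in H_E\oplus H_F\,\mid\, dp_E(v)=dp_F(w)\}\subset T(E\times_Y F),
\]
which projects isomorphically to $TY$ via either factor. Since the $G$--action producing $\tilde E=E\times^G F^{op}$ is simultaneous on both factors, $H$ is $G$--invariant and descends to a horizontal subbundle $\tilde H\subset T\tilde E$ transverse to the vertical bundle $T_\varphi$. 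The corresponding projection
\[
\beta\,:\,T\tilde E\,\lra\, T\tilde E/\tilde H\,\simeq\, T_\varphi\,\simeq\,\varphi^*\text{ad}(F)
\]
restricts to the identity on $T_\varphi$ by construction. The second defining property (the one formulated in Lemma \ref{lem-c}) follows from unwinding the identifications: the horizontal distribution $\tilde H$ is exactly the one coming from adding the $\nabla_0$--horizontal vectors pulled back through the $F$--factor to the $\nabla_E$--horizontal vectors pulled back through the $E$--factor, so under the action map $\delta$ the discrepancy between $\delta^*\beta$ and $\beta$ is precisely $\nabla_0^G$.

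\textbf{Reverse construction and inverse check.} Conversely, given $(\tilde E,\beta)$, set $\tilde H:=\text{ker}(\beta)\subset T\tilde E$, pull it back to $\tilde E\times_Y F$, and intersect (as above) with $H_F$ inside $T(\tilde E\times_Y F)$. The equivariance property of Lemma \ref{lem-c} matches the descent condition for the $F(G)$--quotient realizing the inverse equivalence $\alpha_F^{-1}$, so the resulting distribution descends to a horizontal distribution on the principal $G$--bundle $E=\alpha_F^{-1}(\tilde E)$, i.e.\ a connection $\nabla_E$. These two constructions are mutually inverse because both operate at the level of horizontal distributions on the common cover $E\times_Y F$ (equivalently $\tilde E\times_Y F$), and the diagonal-distribution recipe is manifestly involutive. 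The sanity check already observed in the paper, that for trivial $F$ equipped with the trivial $\nabla_0$ one recovers ordinary connections on principal $G$--bundles, is built into this description.

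\textbf{Main obstacle.} The essential technical point is verifying the equivariance condition $\delta^*\beta=\beta\oplus \nabla_0^G$ of Lemma \ref{lem-c} from the diagonal horizontal distribution on $E\times_Y F$, because the notion of connection on the $F(G)$--torsor depends on $\nabla_0$ in a nontrivial way through the induced connection $\nabla^G_0$ on the group scheme $F(G)$. One must carefully track the identification $T_\varphi\simeq\varphi^*\text{ad}(F)$ against the adjoint representation of $G$ on $\mathfrak g$ used to define $\text{ad}(F)$, and check that the two uses of $\nabla_0$ -- once in forming $H_F$ and once in defining $\nabla_0^G$ -- cancel correctly under the quotient by the diagonal $G$--action. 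Once this bookkeeping is done, the categorical equivalence follows from the fact that $\alpha_F$ is already an equivalence on the underlying objects and the construction is a natural bijection on the (affine) space of connections fiber by fiber.
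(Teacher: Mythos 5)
Your construction is correct and is essentially the paper's own argument: both proofs transfer a connection through the common cover $E\times_Y F$ using $\nabla_0$ on the $F$--factor as the bridge, invoke the descent/equivariance condition of Lemma \ref{lem-c} (condition (2) of the definition), and observe that the two constructions are mutually inverse because they operate on the same object over $E\times_Y F$. The only difference is language: you work with horizontal distributions (diagonal distribution descends; reverse construction intersects $\ker\beta$ with $H_F$), while the paper works dually with connection $1$--forms (the $\mathfrak g$--valued form $(D,\,D')$ descends as an ${\rm ad}(F)$--valued form; the reverse construction subtracts $D'$), and your flagged verification of $\delta^*\beta=\beta\oplus\nabla^G_0$ corresponds to the paper's ``straight-forward to check'' step.
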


\begin{proof}
Let $E$ be a principal $G$--bundle on $Y$. Let $D$ be a connection on $E$. Consider $D$
as a $\mathfrak g$--valued $1$--form on $E$. Let $D'$ denote the $\mathfrak g$--valued
$1$--form on $F$ corresponding to the connection $\nabla_0$ on $F$. So $(D,\, D')$ is
a $\mathfrak g$--valued
$1$--form on the fiber product $E\times_Y F$. The pullback of $\text{ad}(F)$ to $F$ is
identified with the trivial vector bundle $F\times {\mathfrak g}\,\longrightarrow\, F$.
Therefore, $(D,\, D')$ defines a $1$--form with values in the pullback of $\text{ad}(F)$
to $E\times_Y F$. This form on $E\times_Y F$ descends to the quotient $F(G)$--torsor
$E\times^G F^{op}$ as a $1$--form with values in the pullback of ${\rm ad}(F)$
to $E\times^G F^{op}$. It is straight-forward to check
that this form defines a connection on the $F(G)$--torsor corresponding to $E$.

Conversely, let $\beta$ be a connection on a $F(G)$--torsor $\varphi\, :\,E\, 
\longrightarrow\, Y$. Consider the pullback $\beta'$ of $\beta$ to $E\times_Y F$ 
as a $1$--form with values in the pullback of $\text{ad}(F)$. As noted above, the 
pullback of $\text{ad}(F)$ to $E\times_Y F$ is identified with the trivial vector 
bundle with fiber $\mathfrak g$. So $\beta'$ is a $1$--form on $E\times_Y F$ with 
values in $\mathfrak g$. Let $D'$ be the pullback of the connection form $D$ to 
$E\times_Y F$. Then $\beta'-D'$ descends to $E$ by the projection $E\times_Y F\, 
\longrightarrow\, E$, and this descended form defines a connection on the principal
$G$--bundle $E$.

The above two constructions are evidently inverses of each other.
\end{proof}

Assume that $Y$ is equipped with the action of a finite group $\Gamma$. A
$\Gamma$--{\it connection} on a $(\Gamma,G)$--bundle $E$ on $Y$ is a connection on
$E$ which is preserved by the action of $\Gamma$.

\begin{proposition}\label{bijcon}
Let $E \,\lra\, 
Y$ be a $(\Gamma,G)$--bundle on some Galois cover $p\,:\,Y \,\lra\, X$
with Galois group $\Gamma$. Let $\cE$ be the parahoric torsor on $X$
with rational weights corresponding to $E$. Then there is a natural bijection between
the connections on $\cE$ and the $\Gamma$--connections on $E$.
\end{proposition}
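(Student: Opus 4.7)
The plan is to build the bijection by transporting a $\Gamma$--connection through the adjoint representation and then applying the invariant direct image functor, using the already-established equivalence of categories from Section \ref{mtbs} and the fact that for a semisimple $G$ the adjoint map establishes a bijection between connections on a principal $G$--bundle and Lie connections on its adjoint bundle (as discussed after Definition \ref{Lieconprin}). A useful preliminary is to fix, once and for all, a $\Gamma$--invariant connection $\nabla_{_0}$ on the reference $(\Gamma,G)$--bundle $F$; such $\nabla_{_0}$ exists by averaging over the finite group $\Gamma$ in characteristic zero. This renders all the constructions of Section \ref{ac} $\Gamma$--equivariantly natural.

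For the forward direction, given a $\Gamma$--connection $D$ on $E$, extend structure group by $Ad\,:\,G\,\lra\, {\rm GL}(\gfr)$ to obtain a $\Gamma$--equivariant logarithmic connection $D_{\gfr}$ on the adjoint bundle ${\rm ad}(E)\,=\,E(\gfr)$ on $Y$. Because $[,]\,:\, \gfr \otimes \gfr\,\lra\, \gfr$ is $G$--equivariant, $D_{\gfr}$ is automatically a Lie connection. Taking $\Gamma$--invariant direct image yields a logarithmic connection $\nabla$ on $p_*^\Gamma E(\gfr)$, and by the equivalence from Section \ref{mtbs} applied to the adjoint representation (Section \ref{cons-ext-stgp}), the latter is canonically the parabolic vector bundle $\cE(\gfr)_{_*}$ with weights $\theta_{\gfr}$. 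A local computation at each ramification point $y\,\in\, D_Y$, using the identification \eqref{lug} of $\cG|_{D_x}(A)$ with the $\Gamma_y$--invariants in the local automorphisms of $F|_{N_y}$, together with the character decomposition of ${\rm ad}(F)|_{N_y}$ under $\Gamma_y$, shows that ${\rm Res}(\nabla,x)$ is semisimple with eigenvalues exactly the weights $\theta_{\gfr}$ and preserves the quasi--parabolic flag. Hence $\nabla$ is a parabolic connection in the sense of Definition \ref{con-on-parvb}.

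The Lie bracket compatibility required in Definition \ref{Lieconpar} follows from the fact that $[,]\,:\, E(\gfr)\otimes E(\gfr)\,\lra\, E(\gfr)$ is a $\Gamma$--equivariant parallel morphism for $D_{\gfr}$ (it is the pullback of the $G$--equivariant morphism $\gfr\otimes\gfr\,\lra\,\gfr$), and that the parabolic tensor product $\cE(\gfr)\otimes^p \cE(\gfr)$ matches the invariant direct image of $E(\gfr\otimes\gfr)$ by Proposition \ref{canisoparten}; the invariant direct image of a parallel section is parallel for the induced connection $\nabla_{\mathrm{hom}}$. For the inverse direction, start with a connection $\nabla$ on $\cE$ and pull back along $p$ on the complement of $D_Y$, where the invariant direct image is the usual $\Gamma$--quotient and therefore the $\Gamma$--equivariant lift is unique; the local logarithmic structure at points of $D_Y$ is then determined by the residue condition via the same local identification used above. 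The resulting connection on $E(\gfr)$ is a $\Gamma$--equivariant Lie connection by reversing the argument of the previous paragraph, and since $G$ is semisimple the adjoint correspondence produces a unique $\Gamma$--connection on $E$. The two constructions are mutually inverse because invariant direct image and its left adjoint (pullback followed by passage to the equivariant subsheaf) are inverses on the relevant categories.

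The main obstacle is the local matching of residues in the forward construction: verifying that the invariant direct image of a $\Gamma$--equivariant logarithmic connection on ${\rm ad}(E)$ has semisimple residues whose eigenvalues and eigenspace decomposition agree with the parabolic structure $(\cE(\gfr),\,\theta_\gfr)$ prescribed by the Bruhat--Tits group scheme. This is essentially the parabolic--equivariant analogue of Deligne's calculation of residues and relies crucially on the explicit local description of parahoric subgroups as $\Gamma_y$--fixed points, together with the assumption that the weights $\boldsymbol\theta$ are rational (needed to have the Galois cover $p$ available in the first place).
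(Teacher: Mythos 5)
Your argument is correct and follows essentially the same route as the paper: the paper's proof simply invokes the bijection between $\Gamma$--equivariant connections on a vector bundle over $Y$ and connections on the corresponding parabolic vector bundle over $X$, applied (via the adjoint bijection for semisimple $G$ and Definition \ref{Lieconpar}) to $E(\gfr)$ and $\cE(\gfr)_{_*}$, which is exactly the content you spell out, including the residue matching and bracket compatibility that the paper leaves implicit. The only superfluous step is fixing a $\Gamma$--invariant connection $\nabla_{_0}$ on $F$, which is needed for the $F(G)$--torsor notion of connection earlier in that section but plays no role in the Lie-algebra-bundle definition used here.
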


\begin{proof}
This follows from the fact that the connections on a $\Gamma$--equivariant vector bundle
on $Y$ are in bijection with the connections on the corresponding parabolic vector
bundle on $X$.
\end{proof}

\section{Flat unitary connections on parahoric torsors and stability}\label{EH}

\subsection{Polystable parahoric torsors}

\begin{lemma}\label{holonomy}
Let $V_*$ be a polystable parabolic vector bundle of parabolic degree zero with 
real weights $\theta$. Then the parabolic vector bundle $(V_*)^{\otimes m} 
\otimes^p((V_*)^*)^{\otimes n}$ is also polystable.
\end{lemma}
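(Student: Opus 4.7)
The plan is to reduce from the real-weight setting to the rational-weight setting using the variation-of-weights framework developed in Section \ref{variation-ss}, and then appeal to the classical Mehta--Seshadri correspondence. The key input is Proposition \ref{canisoparten}, which tells us that the underlying quasi--parabolic bundle of $V_*^{\otimes m} \otimes^p (V_*^*)^{\otimes n}$ is locally constant in the weights $\theta$ of $V_*$, while the parabolic weights of the tensor bundle are determined algebraically from $\theta$. Consequently, the finitely many destabilizing inequalities for the tensor bundle (in the sense of Proposition \ref{finitesetofineq}) pull back to rational affine functionals on $\ca_{_T}$. Applying the polystability analogue of Lemma \ref{fin-rep-stab} (noted to remain valid in the paragraph preceding Corollary \ref{stabletopolystable}) to the finite collection consisting of the standard representation of $\mathrm{GL}(V)$ together with $V^{\otimes m} \otimes (V^*)^{\otimes n}$, we obtain a rational weight $\theta'$ lying in the common $\ce$--facet of $\theta$ such that polystability of $V_*$ at $\theta$ and at $\theta'$ coincide, and likewise for the tensor bundle.

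With $\theta$ now rational, I invoke the equivalence of Section \ref{mtbs}: the parabolic vector bundle $V_*$ is the invariant direct image of a $(\Gamma, \mathrm{GL}(V))$--bundle $\widetilde{V}$ on a suitable Galois cover $p\,:\,Y \,\longrightarrow\, X$, and polystability of parabolic degree zero translates into polystability of $\widetilde{V}$ as a $\Gamma$--equivariant bundle of degree zero. By the Narasimhan--Seshadri theorem, this is equivalent to $\widetilde{V}$ arising from a unitary representation of $\pi_1(Y)$ which is compatible with the $\Gamma$--action. Since tensor products and duals of unitary representations are again unitary, $\widetilde{V}^{\otimes m} \otimes (\widetilde{V}^*)^{\otimes n}$ is a $\Gamma$--equivariantly polystable bundle on $Y$; its invariant direct image, which by $(\Gamma, G)$--bundle theory agrees with $V_*^{\otimes m} \otimes^p (V_*^*)^{\otimes n}$ (compatibly with the parabolic tensor product, cf.\ \cite{MY} and Proposition \ref{canisoparten}), is therefore polystable.

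The main obstacle in this plan is the \emph{coherent} choice of rational $\theta'$: one must ensure that both polystability of $V_*$ and polystability of the tensor bundle persist under the perturbation $\theta \rightsquigarrow \theta'$. This is precisely why the multi-representation strengthening of Proposition \ref{Cor 2.9} encoded in Lemma \ref{fin-rep-stab} is the right tool, since it allows one to avoid simultaneously the finitely many rational walls associated to each of the representations in question. A secondary technical point is that polystability (not just semistability) is stable under the perturbation within a fixed facet; however, this is automatic since within such a facet the collection of destabilizing inequalities is constant, so the direct-sum decomposition into stable summands of parabolic degree zero is preserved. Once the coherent rational approximation is in place, the argument in the rational case is entirely standard.
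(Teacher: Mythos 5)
Your reduction can be made to work, but it takes a genuinely different and considerably longer route than the paper. The paper's proof is essentially two lines: a parabolic vector bundle of parabolic degree zero with \emph{real} weights is polystable if and only if it arises from a unitary representation $\rho$ of $\pi_1(X\setminus D)$ (\cite{MS}, \cite{biquard} -- the second reference covers real weights directly), and then $(V_*)^{\otimes m}\otimes^p((V_*)^*)^{\otimes n}$ is the parabolic bundle attached to $\rho^{\otimes m}\otimes(\overline{\rho})^{\otimes n}$, which is again unitary. Your proposal instead re-derives the real-weight statement from the rational-weight one: perturb $\theta$ inside a common facet using the polystability analogue of Lemma \ref{fin-rep-stab}, pass to a ramified Galois cover as in Section \ref{mtbs}, apply the ($\Gamma$-equivariant) Narasimhan--Seshadri theorem there, and transfer back. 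What this buys is independence from Biquard's real-weight theorem; what it costs is that you are in effect re-proving, at rational weights, exactly the correspondence that the paper simply quotes for real weights.

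If you want your route to be complete, a few points still need to be nailed down. First, the Galois cover required in Section \ref{rationalweights} does not exist when $X=\PP^1$ and $\# D\le 2$ (with unequal ramification indices), so in those cases you must first add auxiliary parabolic points with weight zero; this is standard but unstated in your sketch. Second, the polystability version of Lemma \ref{fin-rep-stab} is only asserted in the paper to be straightforward, and the facet machinery of Section \ref{variation-ss} is written for torsors under group schemes with semisimple simply connected generic fibre; your application to $V_*$ itself (structure group ${\rm GL}(V)$) is fine because the wall-crossing argument is purely about finitely many rational affine inequalities, but you should say explicitly that subbundles of parabolic degree exactly zero at $\theta$ keep degree zero at $\theta'$ (the relevant functionals lie in $S_1^\theta$, hence vanish on the whole facet), and that the quasi-parabolic structure of the tensor bundle stays constant on the facet only after the walls of the tensor representation are included, as in Section \ref{facets}. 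Third, the identification of parabolic polystability with $\Gamma$-equivariant polystability on the cover and the compatibility of invariant direct images with tensor products should be attributed to \cite{bis}, \cite{pibundles}, \cite{MY}. With these supplied your argument is sound, but the paper's direct appeal to the real-weight unitary correspondence is both shorter and free of these case distinctions.
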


\begin{proof}
A parabolic vector bundle of parabolic degree zero is polystable if and only if it is
given by a unitary representation of $\pi_1(X\setminus D)$, where $D$ is the parabolic
divisor \cite{MS}, \cite{biquard}. Since $V_*$ is polystable, it is given by
a representation $\rho$ of $\pi_1(X\setminus D)$. The parabolic vector bundle
$(V_*)^{\otimes m} \otimes^p((V_*)^*)^{\otimes n}$ is given by the representation
$\rho^{\otimes m}\otimes (\overline{\rho})^{\otimes n}$. This
implies that $(V_*)^{\otimes m} \otimes^p((V_*)^*)^{\otimes n}$ is polystable.
\end{proof}

\begin{corollary}\label{holonomy2}
Take $V_*$ as in Lemma \ref{holonomy}. Take any homomorphism 
$$
\rho\, : \,{\rm GL}(r, {\mathbb C}) \, \longrightarrow\, {\rm GL}(N, {\mathbb C})\, ,
$$
where $r$ is the rank of $V_*$. Let $W_*$ be the parabolic vector bundle associated
to $V_*$ for $\rho$. Then $W_*$ is also polystable.
\end{corollary}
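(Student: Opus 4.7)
The plan is to mimic the argument in the proof of Lemma~\ref{holonomy}, replacing the tensor construction by an arbitrary homomorphism $\rho$. The new ingredient is the classical fact that every compact subgroup of $\mathrm{GL}(N,\mathbb{C})$ is conjugate to a subgroup of the maximal compact subgroup $U(N)$.

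First I would invoke the real-weight Mehta--Seshadri correspondence (Biquard) already used in the proof of Lemma~\ref{holonomy}: polystability of $V_*$ of parabolic degree zero produces a unitary representation
$$\sigma\,:\,\pi_1(X\setminus D)\,\lra\, U(r)$$
whose associated parabolic vector bundle is isomorphic to $V_*$. Then the composition $\rho\circ\sigma\,:\,\pi_1(X\setminus D)\,\lra\, \mathrm{GL}(N,\mathbb{C})$ takes values in the compact subgroup $\rho(U(r))$, being the continuous image of the compact group $U(r)$. Choosing $g\in\mathrm{GL}(N,\mathbb{C})$ with $g\rho(U(r))g^{-1}\subseteq U(N)$ yields a unitary representation
$$g(\rho\circ\sigma)g^{-1}\,:\,\pi_1(X\setminus D)\,\lra\, U(N),$$
to which I would apply the converse direction of Biquard's theorem, obtaining a polystable parabolic vector bundle of parabolic degree zero. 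Since inner conjugation in $\mathrm{GL}(N,\mathbb{C})$ only changes the associated bundle by an isomorphism, this bundle coincides (up to canonical isomorphism) with the one determined by $\rho\circ\sigma$ itself.

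Finally I would identify this last bundle with $W_*$, using the functoriality of the associated-construction in Section~\ref{cons-ext-stgp}: the parabolic bundle obtained from $V_*$ via $\rho$ should equal the parabolic bundle whose monodromy is $\rho$ composed with the monodromy of $V_*$. The main obstacle I anticipate is precisely this matching of parabolic data at each $x_j\in D$. On the one hand, the weights of $W_*$ are obtained by applying the linear map of apartments $\ca_{_T}\,\lra\,\ca_{_{T_H}}$ from Section~\ref{facets} to the weights of $V_*$; on the other, the weights attached to the monodromy $\rho(\sigma(\gamma_j))$, for $\gamma_j$ a small loop around $x_j$, are read off from the eigenvalues of a logarithm of this element via \eqref{mr}. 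These two prescriptions coincide because the map of apartments is the derivative, on the cocharacter lattices, of the homomorphism $\rho|_T\,:\,T\,\lra\, T_H$. This agreement is a local statement on the formal disc at each parabolic point and can be verified there.
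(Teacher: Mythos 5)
Your argument reaches the correct conclusion, but by a genuinely different route from the paper's. The paper's proof is purely algebraic on top of Lemma \ref{holonomy}: it regards $\mathbb{C}^N$ as a ${\rm GL}(r,\mathbb{C})$--module via $\rho$, invokes \cite[p.~40, Proposition~3.1(a)]{DMOS} to exhibit this module as a direct summand of a direct sum of modules of the form $(\mathbb{C}^r)^{\otimes m_i}\otimes((\mathbb{C}^r)^*)^{\otimes n_i}$, and concludes that $W_*$ is a parabolic direct summand of a polystable parabolic bundle of parabolic degree zero, hence polystable. Beyond Lemma \ref{holonomy}, it only needs the compatibility of the associated construction with direct sums and parabolic tensor products (Proposition \ref{canisoparten}). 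Your route instead goes back through the correspondence itself: $V_*$ corresponds to a unitary $\sigma$, the image $\rho({\rm U}(r))$ is compact and hence conjugate into ${\rm U}(N)$, and the canonical extension of $\rho\circ\sigma$ is polystable. This is a natural direct generalization of the paper's own proof of Lemma \ref{holonomy}, and the compactness/conjugation step is unobjectionable.

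The cost of your route is precisely the step you flag, and it is heavier than your sketch suggests: you must identify $W_*$ (defined through the group scheme homomorphism \eqref{assocparabstructure}, i.e.\ through the apartment map and, for real weights, nearby rational weights) with the Mehta--Seshadri canonical extension of the flat bundle with monodromy $\rho\circ\sigma$. Linearity of the apartment map alone does not settle this: the image weight $d\rho(\theta_j)$ need not lie in the fundamental alcove (already for the adjoint representation the induced weights $\alpha_i-\alpha_k$ can be negative), whereas the canonical extension is normalized by the logarithm with eigenvalues in $[0,1)$; matching the two requires the affine Weyl normalization of Remark \ref{usedlater} together with the corresponding modification of the underlying sheaf along $D$, verified on the formal disc at each $x_j$. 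This compatibility is true, and it is of the same nature as the assertion about $\rho\circ\beta$ used in the proof of Theorem \ref{EHT}, but it is the actual content of your argument; the paper's reduction via \cite{DMOS} is designed exactly to avoid proving it for an arbitrary $\rho$. If you keep your route, that local comparison should be stated and proved as a lemma rather than left as a sketch; otherwise the shorter path is the paper's direct-summand argument.
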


\begin{proof}
Consider ${\mathbb C}^N$ as a ${\rm GL}(r, {\mathbb C})$--module using $\rho$ and
the standard representation of ${\rm GL}(N, {\mathbb C})$. This ${\rm GL}(r,
{\mathbb C})$--module ${\mathbb C}^N$ is a direct summand of a direct sum of
${\rm GL}(r, {\mathbb C})$--modules of the form
$({\mathbb C}^r)^{\otimes m_i} (({\mathbb C}^r)^*)^{\otimes n_i}$ \cite[p. 40, Proposition
3.1(a)]{DMOS}. Therefore, from Lemma \ref{holonomy} we conclude that
$W_*$ is a direct summand of a polystable parabolic vector bundle of parabolic
degree zero. Hence $W_*$ is polystable.
\end{proof}

We define polystability for parahoric torsors.

\begin{definition}\label{polystability} Let $\cg_{_{\boldsymbol\theta,X}} \,\lra\, X$ be a Bruhat--Tits group
scheme with generic fiber $G$. A parahoric $\cg_{_{\boldsymbol\theta,X}}$--torsor $\cE$ with real
weights $\boldsymbol\theta$ is said to be polystable if for every representation $\rho\,: \,G
\,\lra\, {\rm GL}(V)$, the corresponding parabolic vector
bundle $(\cE,\boldsymbol\theta(V))$ is polystable in the sense of Definition
\ref{polystable-pvb}.
\end{definition}

\subsection{Polystable parahoric torsors from unitary representations}\label{se10.2}

In this subsection we will first assume that $D\,=\, \{x\}$ is a single point.
The multi-point case is actually a straight-forward generalization.

The complement $X\setminus \{x\}$ would be denoted by $Y$. For a base point $y_0\,\in\, Y$,
set $\Gamma \,=\, \pi_{1}(Y, \,y_0)$. Choose an analytic disc $U \,\subset\, X$ around $x$
such that $y_0\,\in\, U$. The inclusion of $U \setminus \{x\}$ in $Y$ produces 
an inclusion $\pi_{1}(U \setminus \{x\},\, y_0) \,\hookrightarrow\, \Gamma$. Using the orientation
of $U \setminus \{x\}$, the group $\pi_{1}(U \setminus \{x\},\, y_0)$ gets identified
with $\mathbb Z$. The element of $\pi_{1}(U \setminus \{x\},\, y_0)$ corresponding
to $1\, \in\, \mathbb Z$ will be denoted by $\gamma$.

We now recall a description of the set of conjugacy classes in a compact semisimple and simply 
connected group in terms of the Weyl alcove (see \cite[page 9]{bs}). Let $K_G \,\subset\, G$ be a 
fixed maximal compact subgroup and $T$ a fixed maximal torus in $K_G$. The corresponding Weyl group
in the quotient $N_{G}(T)/T$.
The set of conjugacy classes of element in $K_G$ gets identified with the $T/W$ which is in fact the Weyl 
alcove because any element of $K_G$ is conjugate to an element in the
maximal torus up to an element of the Weyl group (cf. \cite[page 151]{morgan}). Given any
$t \,\in\, K_{G}$, let $\theta_{t}$ denote the point in the Weyl alcove corresponding to $t$.

Given any homomorphism
$$
\rho\, :\, \Gamma\,\longrightarrow\, K_G\, ,
$$
let $E_{\rho}$ be the flat principal $G$--bundle on $Y$ associated to it. To construct
$E_{\rho}$, let $(\widetilde{Y},\, \widetilde{y}_0)$ be the pointed universal cover of $Y$ corresponding
to the base point $y_0$; note that $\Gamma$ acts on $\widetilde Y$. Identify two points $(y_1,\, g_1),\, (y_2,\, g_2)\, \in\,
\widetilde{Y}\times G$ if there is an element $\gamma\, \in\, \Gamma$ such that
$(y_2,\, g_2)\,=\, (y_1\gamma,\, \rho(\gamma^{-1})g_2)$. The quotient of $\widetilde{Y}\times G$
is a principal $G$--bundle on $Y$, which is denoted by
$E_{\rho}$; the right translation action of $G$ on $\widetilde{Y}\times G$
produces an action of $G$ on $E_{\rho}$. The flat connection on the trivial principal $G$--bundle
$\widetilde{Y}\times G\,\longrightarrow\, \widetilde Y$ descends to a flat connection on
$E_{\rho}$. For any $h\, \in\, K_G$, the map
$$
\widetilde{Y}\times G \, \longrightarrow\, \widetilde{Y}\times G\, ,\ \ (y,\, z) \,\longmapsto\,
(y,\, \rho(h)z)
$$
descends to an isomorphism
\begin{equation}\label{isomviaconj}
E_{h\rho h^{-1}}\,\stackrel{\sim}{\longrightarrow}\, E_{_\rho}
\end{equation}
as flat principal $G$--bundles on $Y$.

Let
$$
t\,=\,\rho(\gamma) \,\in\, K_G
$$
be the image of $\gamma$. Since $h.\rho. h^{-1}(\gamma) \,=\, h th^{-1}$ for
all $h\, \in\, K_G$, there is the map
$$
\text{Hom}(\Gamma,\, K_G)/K_G \, \longrightarrow\, {\mathcal A}\, ,\ \ \
\rho\, \longmapsto\, \theta_t\, .
$$
After conjugating $\rho$ by an element of $K_G$, we may assume that $t$ belongs to a fixed maximal 
torus $T$ of $K_{G}$.

Let ${\textbf t} \,\in\, \text{Lie}(T)$ be such that
\begin{equation}\label{tt}
\exp(-2\pi\sqrt{-1}{\textbf t}) \,=\, t\, ,
\end{equation}
where $\exp$ denotes the exponential map on the Lie algebra $\text{Lie}(T)$. Consider
the trivial principal $G$--bundle $(U \setminus \{x\})\times G$ over
$U \setminus \{x\}$. The trivial connection on it given by this trivialization will
be denoted by $d_0$. On $(U \setminus \{x\})\times G$, we now have the flat connection
\begin{equation}\label{cond}
\widehat{d}\, =\, d_0+ \frac{{\textbf t}dz}{z}\, ,
\end{equation}
where $z$ is a holomorphic coordinate function on $U$ with $z(x)\,=\, 0$.

Restrict the representation $\rho$ to the subgroup $\pi_{1}(U \setminus \{x\},\, y_0)$. 
This produces a flat principal $G$--bundle $E_{\rho}(\infty) \,\longrightarrow\,
U \setminus \{x\}$. Note that
\begin{equation}\label{viapsi}
E_{_\rho}|_{U \setminus \{x\}} \,\simeq\, E_{_\rho}(\infty)
\end{equation} 
as flat principal $G$--bundles. Both the flat principal $G$--bundles in \eqref{viapsi}
are isomorphic to the flat principal $G$--bundle $((U \setminus \{x\})\times G,\,
\widehat{d})$ constructed in \eqref{cond}. This is because all of them have the
same residue, namely ${\textbf t}$, at $x$. Recall that the residue determines the
conjugacy class of the monodromy (see \eqref{mr}).

To the element $t \,\in\, K_{G} \cap T$, we have the associated conjugacy class 
$\theta_{t}\,\in\, \mathcal A$, and hence by Bruhat--Tits theory have a group scheme 
$\cG_{_{\theta_{t}}}$ on a formal neighborhood $\widehat{U} \,=\, \spec \bc[[t]]$ of $x$. 
This group scheme $\cG_{_{\theta_{t}}}$ is the trivial group scheme $\spec \bc((t))\times 
G$ over $\widehat{U} \setminus x \,=\, \spec \bc((t))$. Therefore, we can extend 
$\cG_{_{\theta_{t}}}$ uniquely to $X$ by setting it to be the trivial group scheme $ 
Y\times G$ over $Y$ \cite[Section 5.2, page 28]{bs}. We denote this group scheme on $X$ 
by $\cG_{t,X}$.

We observe that there is a morphism $\widehat{U} \,\hookrightarrow\, {U}$, where one 
identifies the formal power series ring with the completion of the convergent power 
series ring. Consider the trivial $\cG_{_{\theta_{_t}}}$--torsor on $\widehat{U}$ which 
we denote by $P_{_t}$. Note that the connection on $E_{_\rho}(\infty)$ restricts to a 
natural connection on $P_{_t}|_{\widehat{U} \setminus \{x\}}$.

We now patch together (using for example \cite[5.2.3]{bs}) the trivial
$\cG_{_{\theta_{t}}}$--torsor $P_{_t}$ on $\widehat{U}$ and the principal $G$--bundle
$E_\rho$ over $Y$ along the intersection $\widehat{U} \setminus \{x\} \,=\, \spec \bc((t))$ such that the patching
is connection preserving; as noted above, on $\spec \bc((t))$, both the
principal $G$--bundles are the trivial principal $G$--bundle $\spec \bc((t))\times G$
equipped with the connection $\widehat{d}$.

The above construction is summarized in the following proposition.

\begin{proposition}\label{cc}
Given any homomorphism $\rho\, :\, \Gamma\,\longrightarrow\, K_G$,
and any ${\textbf t} \,\in\, {\rm Lie}(T)$ satisfying \eqref{tt}, the flat principal $G$--bundle
on $Y$ has a canonical extension to a $\cG_{t,X}$--torsor over $X$.
\end{proposition}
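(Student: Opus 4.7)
The plan is to assemble the local and global pieces already constructed above and then verify that the resulting object is a well-defined $\cG_{t,X}$--torsor. First, I would build the flat principal $G$--bundle $E_{_\rho} \to Y$ via the standard monodromy construction from $\rho$ and the universal cover $\widetilde{Y}$. On the analytic disc $U \subset X$ around $x$, consider the trivial $G$--bundle $(U \setminus \{x\}) \times G$ equipped with the logarithmic connection $\widehat{d}$ of \eqref{cond}; its residue at $x$ is $\textbf{t}$, so by \eqref{mr} its monodromy is conjugate to $t$. Since flat $G$--bundles on a punctured disc are classified up to isomorphism by the conjugacy class of the monodromy, the restriction $E_{_\rho}(\infty) = E_{_\rho}|_{U \setminus \{x\}}$ is isomorphic as a flat $G$--bundle to $((U \setminus \{x\}) \times G,\, \widehat{d})$, which is precisely the content of \eqref{viapsi}.

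Next, I would pass to the formal neighborhood $\widehat{U} = \spec \bc[[z]]$ via the natural inclusion $\widehat{U} \hookrightarrow U$ and form the parahoric group scheme $\cG_{_{\theta_t}}$ there by Bruhat--Tits theory, together with its trivial torsor $P_{_t}$. Restricted to the punctured formal disc $\spec \bc((z))$, both $\cG_{_{\theta_t}}$ and $P_{_t}$ become the trivial objects, so the connection $\widehat{d}$ determines a natural connection on $P_{_t}|_{\widehat{U} \setminus \{x\}}$. I would then extend $\cG_{_{\theta_t}}$ globally to $\cG_{t,X}$ by gluing with the trivial group scheme $Y \times G$ over $Y$ along $\spec \bc((z))$ following \cite[Section 5.2]{bs}. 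The desired $\cG_{t,X}$--torsor on $X$ is then produced by patching $P_{_t}$ on $\widehat{U}$ with $E_{_\rho}$ on $Y$ along $\spec \bc((z))$, via the connection--preserving identification of both restrictions with the trivial $G$--bundle carrying $\widehat{d}$.

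The main obstacle is checking canonicity, since a priori the gluing depends on the chosen isomorphism between $E_{_\rho}|_{\widehat{U} \setminus \{x\}}$ and the local model $((U \setminus \{x\}) \times G,\, \widehat{d})$. Any two such flat isomorphisms differ by an automorphism of $((U\setminus\{x\})\times G,\, \widehat{d})$, i.e., by a constant element of $G$ centralizing the monodromy $t$; such an element extends uniquely to an automorphism of $\cG_{_{\theta_t}}$ over $\widehat{U}$ by \eqref{lug} and the fact that Bruhat--Tits group schemes are \'etoff\'e, so the patched torsor is independent of this choice up to a canonical isomorphism. The multi--point case $D = \{x_1,\, \ldots,\, x_m\}$ follows immediately by performing the same local construction independently at each $x_j$ and patching with the flat bundle $E_{_\rho}$ on $Y$ along each formal punctured disc.
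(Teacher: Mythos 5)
Your construction is essentially the paper's own proof: the proposition simply summarizes the preceding construction, namely patching over $\spec \bc((z))$ the trivial $\cG_{_{\theta_t}}$--torsor $P_{_t}$ on $\widehat{U}$ with the flat bundle $E_{_\rho}$ on $Y$, both being identified with the local model carrying $\widehat{d}$ of \eqref{cond} because they have the same residue ${\textbf t}$ and hence the same monodromy conjugacy class, and the multi--point case is handled pointwise exactly as you indicate. Your extra canonicity check goes beyond what the paper records and is sound in substance, up to one small correction: a flat automorphism of the local model is not a constant element of $G$ but a gauge transformation of the form $z^{-{\textbf t}}g_0z^{{\textbf t}}$ with $g_0$ centralizing $t$; such elements do lie in $\cG_{_{\theta_t}}(A)$, which is exactly what makes the glued torsor independent of the chosen identification up to canonical isomorphism.
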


It should be clarified that the $\cG_{t,X}$--torsor in Proposition \eqref{cc} 
depends on the choice of ${\textbf t}$ (a branch of the logarithm), while the 
isomorphism class of the Bruhat--Tits group scheme $\cG_{t,X}$ depends only on the 
conjugacy class $[t]$ of $t$. For $G\, =\, \text{GL}(r, {\mathbb C})$, if the
logarithm ${\textbf t}$ is chosen as done in \cite{MS} (meaning ${\textbf t}$ is
semisimple and eigenvalues are nonnegative and less than $1$), then the construction
in \eqref{cc} coincides with the construction in \cite{MS} of a parabolic vector
bundle from a homomorphism $\Gamma\,\longrightarrow\, \text{U}(r)$. This follows
by comparing the two constructions.

The $\cG_{t,X}$--torsor in Proposition \eqref{cc} will be denoted by
$E_{\rho}(t)$.

It may be mentioned that we may restrict the connection $\widehat{d}$ in 
\eqref{cond} to $\widehat{U}\setminus\{x\}\, \subset\, U\setminus \{x\}$, where 
$\widehat U$ as before is the formal completion along $x$. A $\cG_{t,X}$--torsor on 
$X$ can be trivialized over both $Y$ and $\widehat U$. Conversely, given 
$\cG_{t,X}$--torsors on $Y$ and $\widehat U$, and an isomorphism between them over 
$\widehat{U}\setminus\{x\}$, we get a $\cG_{t,X}$--torsor on $X$. Therefore, the 
connection over $\widehat{U}\setminus\{x\}$ is enough to construct the 
$\cG_{t,X}$--torsor $E_{\rho}(t)$.

\subsection{Polystable parahoric torsors and unitary representations}\label{se10.3}

As before, fix a maximal compact subgroup $K_G$ of $G$.

\begin{theorem}\label{EHT}
Let $(\cE,\, \boldsymbol\theta)$ be a parahoric 
$\cg_{_{\boldsymbol\theta,X}}$--torsor on $X$ with arbitrary real weights 
$\boldsymbol\theta \,\in\, \ca_{_T}^{^m}$. Then $(\cE,\,\boldsymbol\theta)$ is 
polystable if and only if $(\cE,\, \boldsymbol\theta)$ is given by a homomorphism 
from $\pi_1(X\setminus D)$ to $K_G$ as described in Section \ref{se10.2}.
\end{theorem}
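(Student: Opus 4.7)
The plan is to prove the two directions separately, reducing the hard direction to the adjoint representation via the Tannakian strategy mentioned in the introduction.

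\emph{Representation implies polystable.} Given a homomorphism $\rho\colon \pi_1(X\setminus D)\,\lra\, K_G$ with local conjugacy classes matching the weights, and a choice of logarithm $\mathbf{t}$ satisfying \eqref{tt}, the construction of Section \ref{se10.2} produces $E_\rho(t)$ equipped with a canonical flat connection. For every finite-dimensional representation $\sigma\colon G\,\lra\, {\rm GL}(V)$, the associated parabolic vector bundle $\cE(V)_{_*}$ is the one attached to the unitary representation $\sigma\circ\rho$ of $\pi_1(X\setminus D)$, hence by Mehta--Seshadri (in its real-weight form due to Biquard) it is polystable of parabolic degree zero. By Definition \ref{polystability}, the torsor $(\cE,\,\boldsymbol\theta)$ is polystable.

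\emph{Polystable implies representation.} Apply polystability first to the adjoint representation ${\rm Ad}\colon G\,\lra\, {\rm GL}(\gfr)$: the parabolic Lie algebra bundle $(\cE(\gfr),\,\theta_\gfr)$ is polystable of parabolic degree zero, so Mehta--Seshadri/Biquard yields a unitary representation $\widetilde\rho\colon \pi_1(X\setminus D)\,\lra\, {\rm U}(\gfr)$ realising it, with prescribed local conjugacy classes at the punctures dictated by $\boldsymbol\theta_\gfr$. The parabolic Lie bracket \eqref{parhom} is a morphism in the Tannakian category of polystable parabolic bundles of degree zero; equivalently, it is a flat section of ${\rm Hom}^p(\cE(\gfr)\otimes^p\cE(\gfr),\,\cE(\gfr))$. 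This flatness forces every $\widetilde\rho(\gamma)$ to be a Lie algebra automorphism, so $\widetilde\rho$ factors through ${\rm Aut}(\gfr)\cap{\rm U}(\gfr)$, which is the image of $K_G$ under ${\rm Ad}$. Since $G$ is simply connected and semisimple, $K_G$ is simply connected and $K_G\,\lra\, {\rm Aut}(\gfr)^\circ$ is a central covering; the canonical lift $\rho\colon \pi_1(X\setminus D)\,\lra\, K_G$ is singled out by requiring the local monodromy around $x_j$ to equal the specific element $t_j\in K_G\cap T$ prescribed by $\theta_j$ through \eqref{tt}.

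Next, one identifies $E_\rho(t)$ with $(\cE,\,\boldsymbol\theta)$ as a $\cg_{_{\boldsymbol\theta,X}}$--torsor. Over $Y\,=\,X\setminus D$, this reduces to the Narasimhan--Seshadri/Ramanathan theorem applied to the underlying flat principal $G$--bundle, which is determined by $\rho$. Near each $x_j$, the construction in Section \ref{se10.2} patches the trivial $\cG_{_{\theta_{t_j}}}$--torsor on the formal disc $\widehat U_j$ with the flat bundle on $Y$ via the connection $\widehat d$ of \eqref{cond}, whose residue $\mathbf{t}_j$ realises $\theta_j$; this reproduces the local model of $(\cE,\,\boldsymbol\theta)$ at $x_j$ by the uniqueness of Bruhat--Tits group schemes with prescribed $A$--valued points.

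The main obstacle is the extraction of $\widetilde\rho$ and the verification of the Lie-bracket step in the genuine real-weight regime, where the bridge to $(\Gamma,G)$--bundles of Section \ref{mtbs} is unavailable. My plan is to reduce to nearby rational weights using Lemma \ref{fin-rep-stab} applied to the adjoint representation: polystability of $(\cE,\,\boldsymbol\theta)$ is equivalent to polystability of $(\cE,\,\boldsymbol\theta')$ for a rational $\boldsymbol\theta'$ close to $\boldsymbol\theta$ in the sense of Remark \ref{closerational}, and for rational weights \cite{bs} produces a $(\Gamma,G)$--bundle with a $\Gamma$--equivariant unitary flat connection, i.e., a homomorphism $\pi_1(X\setminus D)\,\lra\, K_G$ with the correct local types. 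One must then verify that the representation is independent of the rational perturbation, which should follow because the flat unitary connection on $(\cE(\gfr),\,\theta_\gfr)$ is uniquely determined by its polystable structure and varies continuously in the weight.
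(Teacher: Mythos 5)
Your two directions follow essentially the same route as the paper's proof. The forward direction is the paper's argument verbatim: every associated parabolic bundle of $E_\rho(t)$ comes from the unitary representation $\sigma\circ\rho$, hence is polystable, which is Definition \ref{polystability}. For the converse the paper likewise specializes polystability to the adjoint module (and to $\gfr\otimes\gfr$), uses the real-weight correspondence for parabolic \emph{vector} bundles of parabolic degree zero (Lemma \ref{holonomy}, citing \cite{MS} and \cite{biquard}) to write $\cE(\gfr)$ as a flat unitary parabolic bundle, observes that the Lie bracket \eqref{parhom} is then a homomorphism of $\pi_1(X\setminus D)$--modules, i.e. the flat connection is a connection on the torsor in the sense of Definition \ref{Lieconpar}, and concludes that $(\cE,\boldsymbol\theta)$ comes from a homomorphism to $K_G$; your central-covering lifting argument, with the local monodromies pinned to the $t_j$ of \eqref{tt}, is just a more explicit rendering of that last step (modulo the small point that one should land in the \emph{inner} automorphisms $\mathrm{Ad}(K_G)$, the identity component of ${\rm Aut}(\gfr)$ intersected with the unitary group, which the torsor structure guarantees).

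The only genuine divergence is your final paragraph, and that proposed patch is both unnecessary and would fail. It is unnecessary because the paper's converse never needs the $(\Gamma,G)$--bundle bridge of Section \ref{mtbs}: once one has passed to the associated parabolic vector bundle $\cE(\gfr)$, the real-weight Mehta--Seshadri/Biquard theorem is available as stated, and that is exactly the input used. It would fail because perturbing to a nearby rational weight $\boldsymbol\theta'$ changes the local data that the representation must carry: by \eqref{tt} and \eqref{mr} the conjugacy classes of the monodromy around the punctures are determined by the weights, so the unitary representation produced by \cite{bs} for $(\cE,\boldsymbol\theta')$ reconstructs, via Section \ref{se10.2}, a parahoric torsor with weights $\boldsymbol\theta'$ — not $(\cE,\boldsymbol\theta)$. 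There is no continuity mechanism transporting a flat unitary structure from one weight datum to another: the variation-of-weights results of Section \ref{variation-ss} (Proposition \ref{Cor 2.9}, Lemma \ref{fin-rep-stab}) compare only the finitely many (semi)stability inequalities and say nothing about connections or representations. So the correct course is to delete that paragraph and let your second and third paragraphs stand, which is precisely the paper's proof.
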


\begin{proof}
First assume that $(\cE ,\,\boldsymbol\theta)$ is given by a homomorphism
$$
\beta\, :\, \pi_1(X\setminus D)\, \longrightarrow\, K_G\, .
$$
Let $\rho\,: \,G \,\lra\, {\rm GL}(V)$ be any homomorphism. Fix a maximal compact
subgroup $K_{{\rm GL}(V)}$ of ${\rm GL}(V)$ such that $\beta(K_G)\, \subset\,
K_{{\rm GL}(V)}$. Then the parabolic vector bundle $W_*$ associated to $(\cE\ ,\,\boldsymbol\theta)$ for
$\rho$ is given by the homomorphism $$\rho\circ\beta\, :\, 
\pi_1(X\setminus D)\, \longrightarrow\, K_{{\rm GL}(V)}\, .
$$
Therefore, this associated parabolic vector bundle $W_*$ is polystable.

To prove the converse, assume that $(\cE ,\,\boldsymbol\theta)$ is polystable. Let 
$\cE({\mathfrak g})$ and $\cE({\mathfrak g}\otimes{\mathfrak g})$ be the parabolic 
vector bundles associated to $(\cE ,\,\boldsymbol\theta)$ for the $G$--modules 
$\mathfrak g$ and ${\mathfrak g}\otimes{\mathfrak g}$ respectively. From Definition 
\ref{polystability} we know that both $\cE({\mathfrak g})$ and $\cE({\mathfrak 
g}\otimes{\mathfrak g})$ are polystable of parabolic degree zero. If 
$\cE({\mathfrak g})$ is given by a homomorphism $\beta$ from $\pi_1(X\setminus D)$ 
to a maximal compact subgroup of $\text{GL}({\mathfrak g})$, then $\cE({\mathfrak 
g}\otimes{\mathfrak g})$ is given by $\beta\otimes\beta$.

For any two parabolic vector bundles given by unitary representations of
$\pi_1(X\setminus D)$, any homomorphism between them is given by a homomorphism of
$\pi_1(X\setminus D)$--modules. Let
$$
\gamma\, :\, \cE({\mathfrak g}\otimes{\mathfrak g})\, \longrightarrow\,
\cE({\mathfrak g})
$$
be the homomorphism of parabolic vector bundles given by the Lie bracket
${\mathfrak g}\otimes{\mathfrak g}\, \longrightarrow\, {\mathfrak g}$. From
the above statement we conclude that $\gamma$ is given by a
homomorphism of $\pi_1(X\setminus D)$--modules. This implies that the connection
on $\cE({\mathfrak g})$ is induced by a connection on $(\cE,\, \theta)$ (see
Definition \ref{Lieconpar}). Therefore, $(\cE,\, \boldsymbol\theta)$ is given by a
homomorphism from $\pi_1(X\setminus D)$ to $K_G$.
\end{proof}

A homomorphism $\rho\, :\, \pi_1(X\setminus D)\, \longrightarrow\, K_G$ is called 
irreducible if $\rho(\pi_1(X\setminus D))$ is not contained in
some proper parabolic subgroup of $G$. A homomorphism $\rho$ is irreducible if and 
only if the space of invariants in ${\mathfrak g}$ for the adjoint action of 
$\rho(\pi_1(X\setminus D))$ is the zero element.

\begin{corollary}\label{EHT2}
Let $(\cE,\, \boldsymbol\theta)$ be a parahoric 
$\cg_{_{\boldsymbol\theta,X}}$--torsor on $X$ with arbitrary real weights
$\boldsymbol\theta \,\in\, \ca_{_T}^{^m}$. Then $(\cE,\,\boldsymbol\theta)$ is 
stable if and only if $(\cE,\, \boldsymbol\theta)$ is given by an irreducible
homomorphism from $\pi_1(X\setminus D)$ to $K_G$ as described in Section \ref{se10.2}.
\end{corollary}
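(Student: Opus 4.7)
The plan is to derive the corollary from Theorem~\ref{EHT} by showing that, once polystability has been upgraded to a unitary representation, the stable case corresponds exactly to irreducibility of that representation. The first observation to record is that stability (Definition~\ref{newss}) implies polystability (Definition~\ref{polystability}): every destabilizing inequality tested against a parabolic subgroup scheme is strict, so Theorem~\ref{EHT} applies and produces a homomorphism $\rho\,:\,\pi_1(X\setminus D)\,\lra\, K_G$ whose associated parahoric torsor is $(\cE,\,\boldsymbol\theta)$. It then remains to prove that this $\rho$ is irreducible iff $(\cE,\,\boldsymbol\theta)$ is stable.

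For the forward implication, I would argue by contraposition. Suppose $\rho(\pi_1(X\setminus D))$ is contained in a proper parabolic subgroup $P\,\subset\, G$. Then the associated flat principal $G$--bundle on $X\setminus D$ admits a flat reduction of structure group to $P$; under the canonical extension of Section~\ref{se10.2} this reduction extends to a parabolic subgroup scheme $\eP\,\subset\,\cE({\mathcal G})$ in the sense of Section~\ref{fc}. The Lie algebra subbundle ${\rm Lie}(\eP)\,\subset\, \cE(\gfr)$ is then preserved by the induced flat unitary connection on the polystable parabolic bundle $\cE(\gfr)$, and the usual Chern--Weil argument shows that any flat subbundle of a polystable parabolic vector bundle of parabolic degree zero has parabolic degree zero. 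Thus ${\rm par.deg}({\rm Lie}(\eP)_{_*})\,=\,0$, contradicting stability.

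For the converse, assume $\rho$ is irreducible and suppose for contradiction that $(\cE,\,\boldsymbol\theta)$ is polystable but not stable. Then there is a generic parabolic reduction $P_{_K}\,\subset\, \cE(\cG_{_K})$ whose extension $\eP$ satisfies ${\rm par.deg}({\rm Lie}(\eP)_{_*})\,=\,0$. By Corollary~\ref{stabletopolystable} applied to the adjoint representation (combined with Lemma~\ref{holonomy} and Corollary~\ref{holonomy2}), the parabolic vector bundle $\cE(\gfr)$ is polystable of parabolic degree zero and corresponds to the representation $\text{Ad}\circ\rho$. A subbundle of parabolic degree zero in a polystable parabolic bundle of parabolic degree zero is a direct summand given by a $\pi_1(X\setminus D)$--invariant subspace, so ${\rm Lie}(\eP)_{_*}$ corresponds to an $\text{Ad}\circ\rho$--invariant subspace $\mathfrak{p}\,\subset\,\gfr$. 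The fact that $\eP$ is a subgroup scheme (so its Lie algebra is closed under the bracket \eqref{parhom}) together with the compatibility of the bracket with the parabolic structure shows $\mathfrak{p}$ is a Lie subalgebra; and because $\eP$ arises fiberwise as a parabolic subgroup of $G$, the generic fiber $\mathfrak{p}$ is in fact a proper parabolic Lie subalgebra. Its normalizer in $G$ is the corresponding proper parabolic subgroup $P$, and the invariance of $\mathfrak{p}$ under $\text{Ad}\circ\rho$ forces $\rho(\pi_1(X\setminus D))\,\subset\, P$, contradicting irreducibility.

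The main obstacle I expect is the technical identification in the converse direction: one must show that the subbundle ${\rm Lie}(\eP)_{_*}\,\subset\,\cE(\gfr)$, produced merely by flat closure of a generic parabolic reduction, actually corresponds to a genuine parabolic Lie subalgebra of $\gfr$ that is stable under $\text{Ad}\circ\rho$. This uses in an essential way both the parabolic version of the Narasimhan--Seshadri / Mehta--Seshadri theorem (that parabolic degree--zero subbundles of polystable parabolic bundles are flat subrepresentations) and the compatibility of the Lie bracket with the parabolic structure established in Section~\ref{lub}; the rest of the argument is then a direct reduction to the group--theoretic statement that the normalizer of a proper parabolic subalgebra is a proper parabolic subgroup.
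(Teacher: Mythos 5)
Your proposal is correct, and its forward half coincides with the paper's: if $\rho(\pi_1(X\setminus D))$ lies in a proper parabolic $P\,\subset\,G$, the resulting flat reduction over $X\setminus D$ gives a reduction datum whose Lie algebra bundle ${\rm Lie}(\eP)_{_*}$ has parabolic degree zero (it is a flat subbundle of the flat unitary parabolic bundle $\cE({\mathfrak g})$, and the flat closure indeed produces the saturated subsheaf), contradicting the strict inequality of Definition~\ref{newss}; the paper states this in one line and you merely supply the justification. For the converse, however, you take a genuinely different route. The paper invokes Simpson \cite{Si}: irreducibility of $\rho$ forces the polystable bundle $\cE({\mathfrak g})$ to have no nonzero holomorphic section, and a polystable-but-not-stable torsor would produce such a section, so the torsor is stable. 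You instead analyse a hypothetical destabilizing reduction directly: polystability pins its parabolic degree at exactly zero, the Mehta--Seshadri correspondence then makes ${\rm Lie}(\eP)_{_*}$ a flat subbundle corresponding to an ${\rm Ad}\circ\rho$--invariant subspace ${\mathfrak p}\,\subset\,\gfr$, which is a proper parabolic subalgebra, and since the normalizer of ${\mathfrak p}$ in $G$ is the corresponding proper parabolic subgroup you conclude $\rho(\pi_1(X\setminus D))\,\subset\,P$, contradicting irreducibility. Your argument is more self-contained (no appeal to Simpson's section theorem, nor to the implicit step that polystable-not-stable yields an infinitesimal automorphism), at the cost of the two standard facts you flag, namely that degree-zero parabolic subbundles of polystable degree-zero parabolic bundles are flat subrepresentations and that parabolic subalgebras are self-normalizing. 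One small point: your opening justification that stability implies polystability (``every destabilizing inequality is strict'') only yields semistability as phrased; the correct source is Corollary~\ref{stabletopolystable}, which you do invoke later and which is also what the paper uses implicitly when it writes ``Therefore, $(\cE,\,\boldsymbol\theta)$ is polystable.''
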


\begin{proof}
Assume that $(\cE,\, \boldsymbol\theta)$ is stable.
Therefore, $(\cE,\, \boldsymbol\theta)$ is polystable.
If the homomorphism $\rho\, :\, \pi_1(X\setminus D)\, \longrightarrow\, K_G$
corresponding to $(\cE,\, \boldsymbol\theta)$
has the property that $\rho(\pi_1(X\setminus D))$ is contained in a proper 
parabolic subgroup $P$ of $G$, then the reduction of $\cE$ to $P$ over $X\setminus 
D$ contradicts the stability of $(\cE,\, \boldsymbol\theta)$. Therefore, $\rho$ is 
irreducible.

Conversely, for a polystable $(\cE,\, \boldsymbol\theta)$, if the
corresponding homomorphism $\rho\, :\, \pi_1(X\setminus D)\, \longrightarrow\, 
K_G$ is irreducible, then the polystable parabolic vector bundle $\cE({\mathfrak g})$
does not admit any holomorphic section \cite[p.~744, Theorem~3]{Si}. Consequently,
the polystable torsor $(\cE,\, \boldsymbol\theta)$ is stable.
\end{proof}

\begin{remark}
The Corlette--Donaldson--Hitchin--Simpson correspondence between flat $G$--bundles and $G$--Higgs
bundles also extends to the parahoric case. When $G \,=\, \text{GL}(n,{\mathbb C})$ this was proved by
Simpson in \cite{Si}. This result of \cite{Si} is the key ingredient in this extension for
general $G$. Using this result the question for $G$ is reduced to one on vector bundles using the
adjoint representation of $G$. The approach in the present paper then goes through without any essential
difficulty.
\end{remark}

\begin{remark}
The paper \cite{biqetc} considers the problem of parabolic Higgs $G$--bundle and 
the Corlette--Donaldson--Hitchin--Simpson correspondence on curves from a somewhat
different perspective and also consider real representations.
\end{remark}

\begin{remark}
The Atiyah--Weil criterion, \cite{At}, \cite{We}, \cite{AB}, for the existence of a
holomorphic connection on a holomorphic principal $G$--bundle generalizes to
${\mathcal G}$--torsors. The proof in \cite{AB} has a straight-forward generalization.
Similarly, the Atiyah--Krull--Schmitt reduction of a holomorphic principal $G$--bundle,
\cite{BBN}, generalizes to ${\mathcal G}$--torsors.
\end{remark}

\begin{remark}
Theorem \ref{EHT} evidently generalizes to the situation where 
$G$ is a product of simple and simply connected groups. The more general case of
semisimple groups $G$ that are not simply connected is covered by using twisted
bundles as in \cite{BLS}. For a reductive group $G$, the natural map
$G\, \longrightarrow\, G/Z_0(G)\times (G/[G,\, G])$ is surjective with finite kernel,
where $Z_0(G)$ is the connected component of the center of $G$ containing the identity
element. Since $G/Z_0(G)$ is semisimple and $/[G,\, G]$ is a product of copies of ${\mathbb G}_m$,
to prove Theorem \ref{EHT} it suffices to prove it for ${\mathbb G}_m$. But this was
done in \cite{Si}.
\end{remark}

\subsection{The reductive case}

We now indicate briefly how to extend the consideration of (semi)stability of torsors in the case 
when the structure group $G$ is a connected reductive algebraic group and identify it with the 
space of homomorphisms from $\pi$ to $K_G$. However, the corresponding relationship with 
parahoric torsors for reductive $G$ needs a closer analysis of Bruhat--Tits theory for reductive 
groups.

Let $S\,= \,[G,\, G]$ be the derived group, i.e. {\em the maximal connected semisimple subgroup} 
of $G$. Let $Z_0$ be the connected component of the center of $G$ (which is a torus) and one know 
that $S$ and $Z_0$ together generate $G$. Let $H\,=\, Z_0 \times S$. Then in fact, $H \, 
\longrightarrow\, G$ is a finite covering map. It is easy to see (following \cite[page 145]{Ra}) 
that $(\Gamma,H)$--bundles gives rise to $(\Gamma,G)$--bundles and the stability and 
semistability of the associated $(\Gamma,G)$--bundles follows immediately from that of the 
$(\Gamma,H)$--bundles.

Thus, the problem of handling the reductive group $H$ reduces to the problem of handling the 
semisimple group $G$ but which is {\em not simply connected}. On the side of Bruhat--Tits group 
schemes and parahoric group schemes, for a general connected reductive $G$ the existence of 
Bruhat--Tits group schemes are well known and would give the existence of similar group schemes on 
the whole of $X$. Several technical issues which one has avoided are in the setting of the 
Bruhat--Tits buildings. Canonical choices of apartments and alcoves which give a transparent 
meaning to the association of conjugacy classes with weights in the alcove would need technical 
modifications which led us too far afield; future considerations would therefore need a careful 
discussion on a ``canonical" choice of apartment as for example indicated in (\cite[page 
32]{tits}).

\section*{Acknowledgements}

We are very grateful to the referee for very detailed comments. A portion of Section 
\ref{se.or} was formulated by the referee. The first two authors acknowledge partial 
support by the J. C. Bose Fellowship. The first author thanks Department of Mathematics, 
Pondicherry University for its hospitality where this work was completed.


\begin{thebibliography}{ZZZZZ}

\bibitem[At]{At} M.F. Atiyah, Complex analytic connections in fibre bundles, {\it Trans. 
Amer. Math. Soc.} {\bf 85} (1957) 181--207.

\bibitem[AB]{AB} H. Azad and I. Biswas, On holomorphic principal bundles over a 
compact Riemann surface admitting a flat connection, {\it Math. Ann.} {\bf 322} (2002),
333--346.

\bibitem[BLS]{BLS}A. Beauville, Y. Laszlo and C. Sorger, The Picard group of the
moduli of G-bundles on a curve, {\it Compositio Math.} {\bf 112} (1998), 183--216.

\bibitem[BBN1]{babina} V. Balaji, I. Biswas and D. S. Nagaraj, Principal 
bundles over projective manifolds with parabolic structure over a divisor, {\it Tohoku Math. 
Jour.} {\bf 53} (2001), 337--368.

\bibitem[BBN2]{BBN} V. Balaji, I. Biswas and D. S. Nagaraj, Krull--Schmidt reduction for
principal bundles, {\it Jour. Reine Angew. Math.} {\bf 578} (2005), 225--234. 

\bibitem[BS]{bs} V. Balaji and C.S Seshadri, {M}oduli of parahoric {$\cg$}-torsors on a 
compact {R}iemann surface, {\em Jour. Alg. Geom.} {\bf 24} (2015), 1--49. (arxiv:math.AG 
1009.3485)

\bibitem[Bis]{bis} I. Biswas, Parabolic bundles as
orbifold bundles, {\it Duke Math. Jour.} \textbf{88} (1997), 305--325.

\bibitem[Biq]{biquard} O. Biquard, Fibr\'es Paraboliques Stables et Connexions 
Singuli\`eres Plates, {\it Bull. Soc. Math. Fr.} {\bf 119} (1991), 231--257.

\bibitem[BGM]{biqetc} O. Biquard, Oscar Garcia-Prada, Ignasi Mundet i Riera, 
Parabolic Higgs bundles and representations of the fundamental group of a punctured 
surface into a real group, arxiv:1510.04207v1.

\bibitem[Bo]{boalch} P. Boalch, Riemann-Hilbert for tame complex parahoric 
connections, {\it Transform. Gr.} {\bf 16} (2011), 27--50. (math arxiv DG 1003.3177).

\bibitem[BLR]{blr} S. Bosch, W. Lutkebohmert and M. Raynaud, {\it Neron models}, 
Ergebnisse 21, Springer Verlag, Berlin-New York, 1990.

\bibitem[BT1]{bt1} F. Bruhat and J. Tits, Groupes r\'eductifs sur un Corps Local, 
I: Donn\'ees radicielles valu\'ees, {\it Inst. Hautes \'Eludes Sci. Publ. Math.} 
{\bf 41} (1972), 5--252.

\bibitem[BT2]{bt} F. Bruhat and J. Tits, Groupes r\'eductifs sur un corps local II, 
Sch\'emas en groupes, {\it Inst. Hautes \'Eludes Sci. Publ. Math.} {\bf 60} (1984), 
197--376.

\bibitem[De]{del} P. Deligne, {\it \'Equations diff\'erentielles \`a points 
singuliers r\'eguliers}, Lecture Notes in Mathematics, Vol. 163, Springer-Verlag, 
Berlin-New York, 1970.

\bibitem[DMOS]{DMOS} P. Deligne, J. S. Milne, A. Ogus and K.-y. Shih,
{\it Hodge cycles, motives, and Shimura varieties},
Lecture Notes in Mathematics, 900, Springer-Verlag, Berlin-New York, 1982.

\bibitem[Ed]{bass} B. Edixhoven, N\'eron models and tame ramification, {\it
Compos. Math.} {\bf 81} (1992), 291--306.

\bibitem[He1]{He} J. Heinloth, Uniformization of $\mathcal G$-bundles,
{\it Math. Ann.} {\bf 347} (2010), 499--528. 

\bibitem[He2]{He2} J. Heinloth, Hilbert-Mumford stability on algebraic stacks and 
applications to $\mathcal G$-bundles on curves, arXiv:1609.06058.

\bibitem[Hu]{Hu} J. E. Humphreys, \textit{Linear algebraic
groups,} Graduate Texts in Mathematics, Vol. 21,
Springer-Verlag, New York, Heidelberg, Berlin, 1987.

\bibitem[La]{L} E.Landvogt, Some functorial properties of the Bruhat-Tits building, 
{\it Jour. Reine Angew. Math.} {\bf 518} (2000), 213--241.

\bibitem[MY]{MY} M. Maruyama and K.Yokogawa, Moduli of parabolic stable sheaves. {\it 
Math. Ann.} {\bf 293} (1992), 77--99.

\bibitem[MS]{MS} V.Mehta and C.S. Seshadri, Moduli of vector bundles on smooth curves 
with parabolic structures, {\it Math. Ann.} {\bf 248} (1980), 205--239.

\bibitem[Mor]{morgan} J.W. Morgan, Holomorphic bundles over
elliptic manifolds, {\it ICTP Lecture Notes}, Volume 1, (2000).

\bibitem[Na]{Na} M. Namba, \textit{Branched coverings and algebraic
functions}, Pitman Research Notes in Mathematics Series, 161,
Longman Scientific $\&$ Technical, Harlow; John Wiley $\&$ Sons,
Inc., New York, 1987.

\bibitem[NS]{NS} M. S. Narasimhan and C. S. Seshadri,
Stable and unitary vector bundles on a compact Riemann
surface, \textit{Ann. of Math.} \textbf{82} (1965), 540--567.

\bibitem[No]{nori} M.V. Nori, The fundamental group scheme, {\it Proc. Ind. Acad. Sci. 
Math. Sci.} {\bf 91} (1982), 73--122.

\bibitem[PR1]{pr} G. Pappas and M. Rapoport, Twisted loop groups and their affine flag 
varieties, {\it Advances in Math} {\bf 219} (2008), 118--198.

\bibitem[PR2]{pr2} G. Pappas and M. Rapoport, Some questions about
{$\cg$}-bundles on curves, {\it Algebraic and arithmetic structures of moduli
spaces} ({S}apporo 2007), pp.~159--171, Adv. Stud. Pure Math., vol.~58, Math. Soc. Japan,
Tokyo, 2010.

\bibitem[Ra]{Ra} A. Ramanathan, Stable principal bundles on a compact Riemann 
surface, {\it Math. Ann.} {\bf 213} (1975), 129--152.

\bibitem[RS]{rs} A. Ramanathan and Subramanian, Einstein-Hermitian 
connections on principal bundles and stability, {\it Jour. Reine Angew. Math.} {\bf 390}
(1988), 21--31.

\bibitem[Se1]{pibundles} C. S. Seshadri, Moduli of $\pi$--vector bundles over an algebraic curve, 
{\it Questions On algebraic Varieties}, C.I.M.E, Varenna, (1969), 139-261. (For a new corrected 
version see http://www.cmi.ac.in/~balaji/Publications/CSS1.19.pdf).

\bibitem[Se2]{sesass} C. S. Seshadri, Remarks on Parabolic Structures, {\it Contemporary 
Mathematics}, Volume {\bf 522}, Papers in Honour of S.Ramanan, 2010.

\bibitem[SGA]{sga} S\'eminaire de G\'eom\'etrie Alg\`ebrique du Bois-Marie SGA3, 
S\'eminaire 1962/1964 - Sch\'emas en groupes - (SGA 3) - vol. 1 (Lecture notes in 
mathematics 151) (in French). Berlin; New York: Springer-Verlag.

\bibitem[Si]{Si} C. T. Simpson, Harmonic bundles on noncompact curves, {\it Jour.
Amer. Math. Soc.} {\bf 3} (1990), 713--770.

\bibitem[Sp]{springer} T. A. Springer, {\it Linear algebraic groups}, Second 
edition, Progress in Mathematics, 9, Birkh\"auser Boston, Inc., Boston, MA, 1998.

\bibitem[TW]{tw} C. Teleman and C. Woodward, Parabolic bundles, products of conjugacy
classes, and quantum cohomology, {\it Ann. Institut Fourier} {\bf 3} (2003), 713--748.

\bibitem[Tits]{tits} J. Tits, Reductive groups over local fields, {\it Proceedings of 
Symposia in Pure Mathematics}, Vol 33, (1979), 29-69.

\bibitem[We]{We} A. Weil, G\'en\'eralisation des fonctions ab\'eliennes, {\it Jour. Math. 
Pures Appl.} {\bf 17} (1938), 47--87.

\end{thebibliography}
\end{document}